\let\oldmarginpar\marginpar
\renewcommand\marginpar[1]{\-\oldmarginpar[\raggedleft\footnotesize #1]%
{\raggedright\footnotesize #1}}
\begin{document}

\newtheorem{theorem}{Theorem}[section]
\newtheorem{corollary}[theorem]{Corollary}
\newtheorem{lemma}[theorem]{Lemma}
\newtheorem{proposition}[theorem]{Proposition}
\theoremstyle{definition}
\newtheorem{definition}[theorem]{Definition}
\theoremstyle{remark}
\newtheorem{remark}[theorem]{Remark}
\theoremstyle{definition}
\newtheorem{example}[theorem]{Example}

\def\rank{{\text{rank}\,}}

\numberwithin{equation}{section}

\title{Semi-slant Riemannian maps}

\author{Kwang-Soon Park}
\address{Department of Mathematical Sciences, Seoul National University, Seoul 151-747, Republic of Korea}
\email{parkksn@gmail.com}

\keywords{Riemannian map; semi-slant angle; harmonic map; totally geodesic}

\subjclass[2000]{53C15; 53C43.}   

\begin{abstract}
As a generalization of slant submersions \cite{S},
semi-slant submersions \cite{PP}, and slant Riemannian maps \cite{S4}, we define the notion of semi-slant Riemannian maps from
almost Hermitian manifolds to Riemannian manifolds. We study the integrability of distributions, the geometry of fibers,
the harmonicity of such maps, etc. We also find a condition for such maps to be totally geodesic and investigate some decomposition theorems.
Moreover, we give examples.
\end{abstract}

\maketitle
\section{Introduction}\label{intro}
\addcontentsline{toc}{section}{Introduction}

Given a $C^{\infty}$-map $F$ from a Riemannian manifold  $(M,g_M)$ to a Riemannian manifold $(N,g_N)$, according to the conditions on the map $F$,
we call $F$ a harmonic map \cite{BW}, a totally geodesic map \cite{BW}, an isometric immersion \cite{C}, a Riemannian submersion \cite{O},
a Riemannian map \cite{F}, etc.

For the isometric immersion $F$, it is originated from Gauss' work, which studied surfaces in the Euclidean space $\mathbb{R}^3$ and there are a lots
of papers and books on this topic. With $F$ to be a Riemannian submersion, B. O'Neill \cite{O} and A. Gray \cite{G} firstly studied the map $F$.
And there are several kinds of Riemannian submersions:

semi-Riemannian submersion and Lorentzian submersion \cite{FIP},
Riemannian submersion (\cite{G}, \cite{O}), slant submersion
(\cite{C}, \cite{S}), anti-invariant submersion \cite{S3}, almost Hermitian submersion \cite{W},
contact-complex submersion \cite{IIMV}, quaternionic submersion
\cite{IMV}, almost h-slant submersion
\cite{P}, semi-invariant submersion \cite{S2}, almost h-semi-invariant
submersion  \cite{P2}, semi-slant submersion \cite{PP}, almost h-semi-slant submersions
 \cite{P3}, v-semi-slant submersions \cite{P4}, almost h-v-semi-slant submersions
 \cite{P5}, etc.

A. Fischer \cite{F} introduced a Riemannian map $F$, which generalizes and unifies the notions of an isometric immersion, a Riemannian submersion, and an isometry. After that, there are lots of papers on this topic. Moreover, B. Sahin defined a slant Riemannian map \cite{S4}. As a generalization of slant Riemannian maps \cite{S4} and semi-slant submersions \cite{PP}, we will define a semi-slant Riemannian map.

The paper is organized as follows. In section 2 we remind some notions, which are needed for later use. In section 3 we give the definition of a semi-slant Riemannian map and obtain some properties on it. In section 4 using a semi-slant Riemannian map, we obtain some decomposition theorems. In section 5  we give examples.

\section{Preliminaries}\label{prelim}

Let $(M,g_M)$ and $(N,g_N)$ be Riemannian manifolds, where $M$, $N$ are $C^{\infty}$-manifolds and $g_M$, $g_N$ are Riemannian metrics on $M$, $N$, respectively. Let $F : (M,g_M) \mapsto (N,g_N)$ be a $C^{\infty}$-map. We call the map $F$ a {\em $C^{\infty}$-submersion} if $F$ is surjective and the differential $(F_*)_p$  has a maximal rank for any $p\in M$. The map $F$ is said to be a {\em Riemannian submersion} \cite{O} if $F$ is a $C^{\infty}$-submersion and $(F_*)_p : ((\ker (F_*)_p)^{\perp}, (g_M)_p) \mapsto (T_{F(p)} N, (g_N)_{F(p)})$ is a linear isometry for each $p\in M$, where $(\ker (F_*)_p)^{\perp}$ is the orthogonal complement of the space $\ker (F_*)_p$ in the tangent space $T_p M$ of $M$ at $p$. We call the map $F$ a {\em Riemannian map} \cite{F} if $(F_*)_p : ((\ker (F_*)_p)^{\perp}, (g_M)_p) \mapsto ((range F_*)_{F(p)}, (g_N)_{F(p)})$ is a linear isometry for each $p\in M$, where $(range F_*)_{F(p)} := (F_*)_p ((\ker (F_*)_p)^{\perp})$ for $p\in M$. Let $(M,g_M,J)$ be an almost Hermitian manifold and $(N,g_N)$ a Riemannian manifold, where $J$ is an almost complex structure on $M$. Let $F : (M,g_M,J) \mapsto (N,g_N)$ be a $C^{\infty}$-map. We call the map $F$ a {\em slant submersion} \cite{S} if $F$ is a Riemannian submersion
and the angle $\theta = \theta (X)$ between $JX$ and the space $\ker (F_*)_p$ is constant for nonzero $X\in \ker (F_*)_p$
and $p\in M$.

We call the angle $\theta$ a {\em slant angle}.

The map $F$ is said to be an {\em anti-invariant submersion}
\cite{S3} if $F$ is a Riemannian submersion and  $JX\in \Gamma((\ker F_*)^{\perp})$ for $X\in \Gamma(\ker F_*)$.
We call the map $F$ a {\em semi-invariant submersion} \cite{S2} if $F$ is a Riemannian submersion and  there is a distribution
$\mathcal{D}_1 \subset \ker F_*$ such that
$$
\ker F_*=\mathcal{D}_1\oplus \mathcal{D}_2, \
 J(\mathcal{D}_1)=\mathcal{D}_1, \ J(\mathcal{D}_2)\subset (\ker
F_*)^{\perp},
$$
where $\mathcal{D}_2$ is the orthogonal complement of
$\mathcal{D}_1$ in $\ker F_*$.
The map $F$ is said to be a {\em semi-slant submersion}
\cite{PP} if $F$ is a Riemannian submersion and there is a
distribution $\mathcal{D}_1\subset \ker F_*$ such that
$$
\ker F_* =\mathcal{D}_1\oplus \mathcal{D}_2, \
J(\mathcal{D}_1)=\mathcal{D}_1,
$$
and the angle $\theta=\theta(X)$ between $JX$ and the space
$(\mathcal{D}_2)_q$ is constant for nonzero $X\in
(\mathcal{D}_2)_q$ and $q\in M$, where $\mathcal{D}_2$ is the
orthogonal complement of $\mathcal{D}_1$ in $\ker F_*$.

We call the angle $\theta$ a {\em semi-slant angle}.

We call the map $F$ a {\em slant Riemannian map} \cite{S4} if $F$ is a Riemannian map and the angle $\theta = \theta (X)$ between $JX$
and the space $\ker (F_*)_p$ is constant for nonzero $X\in \ker (F_*)_p$ and $p\in M$.

We call the angle $\theta$ a {\em slant angle}.

The map $F$ is said to be a {\em semi-invariant Riemannian map} \cite{S5} if $F$ is a Riemannian map and  there is a distribution
$\mathcal{D}_1 \subset \ker F_*$ such that
$$
\ker F_*=\mathcal{D}_1\oplus \mathcal{D}_2, \
 J(\mathcal{D}_1)=\mathcal{D}_1, \ J(\mathcal{D}_2)\subset (\ker
F_*)^{\perp},
$$
where $\mathcal{D}_2$ is the orthogonal complement of
$\mathcal{D}_1$ in $\ker F_*$.
Let $F : (M, g_M) \mapsto (N, g_N)$ be
a $C^{\infty}$-map. The second fundamental form of $F$ is given by
\begin{equation}\label{eq: second}
(\nabla F_*)(X,Y) := \nabla^F _X F_* Y-F_* (\nabla _XY) \quad
\text{for} \ X,Y\in \Gamma(TM),
\end{equation}
where $\nabla^F$ is the pullback connection and we denote
conveniently by $\nabla$ the Levi-Civita connections of the
metrics $g_M$ and $g_N$ \cite{BW}. Remind that $F$ is said to be {\em
harmonic} if we have the tension field $\tau (F) := trace (\nabla F_*)=0$ and we call the map $F$ a {\em
totally geodesic} map if $(\nabla F_*)(X,Y)=0$ for $X,Y\in \Gamma
(TM)$ \cite{BW}.
Denote the range of $F_*$ by $range F_*$ as a subset of the pullback bundle $F^{-1} TN$. With its orthogonal complement $(range F_*)^{\perp}$
we have the following decomposition
$$
F^{-1} TN = range F_* \oplus (range F_*)^{\perp}.
$$
Moreover, we get
$$
TM = \ker F_* \oplus (\ker F_*)^{\perp}.
$$
Then we easily have

\begin{lemma}\label{lem: hori}
Let $F$ be a Riemannian map from a Riemannian manifold $(M,g_M)$ to a Riemannian manifold $(N,g_N)$. Then
$$
(\nabla F_*)(X,Y) \in \Gamma((range F_*)^{\perp}) \quad \text{for} \ X,Y\in \Gamma((\ker F_*)^{\perp}).
$$
\end{lemma}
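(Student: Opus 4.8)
The plan is to verify the orthogonality pointwise by pairing $(\nabla F_*)(X,Y)$ against an arbitrary element of $range F_*$. Since every vector in $(range F_*)_{F(p)}$ has the form $(F_*)_p Z$ for some horizontal $Z \in \Gamma((\ker F_*)^{\perp})$, it suffices to prove that
$$
g_N((\nabla F_*)(X,Y), F_* Z) = 0 \quad \text{for all} \ X,Y,Z \in \Gamma((\ker F_*)^{\perp}).
$$
Writing $A(X,Y,Z) := g_N((\nabla F_*)(X,Y), F_* Z)$, the goal becomes $A \equiv 0$ on horizontal fields.

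First I would exploit the defining isometry of a Riemannian map: for horizontal $Y,Z$ one has $g_N(F_* Y, F_* Z) = g_M(Y,Z)$. Differentiating this identity along a horizontal field $X$ and using that the pullback connection $\nabla^F$ is compatible with $g_N$ gives
$$
g_N(\nabla^F _X F_* Y, F_* Z) + g_N(F_* Y, \nabla^F _X F_* Z) = g_M(\nabla _X Y, Z) + g_M(Y, \nabla _X Z).
$$
Next I substitute $\nabla^F _X F_* Y = (\nabla F_*)(X,Y) + F_*(\nabla _X Y)$ from \eqref{eq: second}, and likewise for the $Z$-term. The key point in handling the resulting expressions is that $F_*$ annihilates $\ker F_*$: decomposing $\nabla _X Y$ into its horizontal and vertical parts, the vertical part is killed by $F_*$, and the isometry property yields $g_N(F_*(\nabla _X Y), F_* Z) = g_M(\nabla _X Y, Z)$ since $Z$ is horizontal (and symmetrically for the other term). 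These two inner products exactly cancel the right-hand side above, leaving the antisymmetry relation
$$
A(X,Y,Z) = - A(X,Z,Y).
$$

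Finally I would combine this with the symmetry of the second fundamental form. Because the Levi-Civita connections are torsion-free and $\nabla^F _X F_* Y - \nabla^F _Y F_* X = F_*[X,Y]$, the tensor $(\nabla F_*)$ is symmetric, whence $A(X,Y,Z) = A(Y,X,Z)$. A standard cyclic manipulation, alternately applying symmetry in the first two slots and antisymmetry in the last two, yields $A(X,Y,Z) = -A(X,Y,Z)$, forcing $A \equiv 0$, which is exactly the assertion. The only delicate bookkeeping is the horizontal-vertical splitting in the middle step and keeping the index permutations straight in the final cyclic argument; neither constitutes a genuine obstacle, so I expect the proof to be short.
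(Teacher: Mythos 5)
Your proof is correct, and it cannot be compared against a proof in the paper for a simple reason: the paper gives none. Lemma~\ref{lem: hori} is introduced by the phrase ``Then we easily have'' and is effectively imported from the literature on Riemannian maps (Fischer \cite{F}; the same lemma appears with proof in Sahin's work \cite{S4}). Your argument supplies exactly the standard justification. Each step checks out: (i) every element of $(range\, F_*)_{F(p)}$ is $(F_*)_pZ_p$ for a horizontal $Z_p$ by the paper's definition of $range\, F_*$, and the tensoriality of $A(X,Y,Z)=g_N((\nabla F_*)(X,Y),F_*Z)$ makes the pointwise reduction legitimate; (ii) differentiating $g_N(F_*Y,F_*Z)=g_M(Y,Z)$ and using metric compatibility of $\nabla^F$ and of the Levi-Civita connection, together with the cancellation $g_N(F_*(\nabla_XY),F_*Z)=g_M(\nabla_XY,Z)$ (valid because the vertical part of $\nabla_XY$ is annihilated by $F_*$ on one side and is orthogonal to the horizontal field $Z$ on the other), yields the antisymmetry $A(X,Y,Z)=-A(X,Z,Y)$; (iii) torsion-freeness gives symmetry of $\nabla F_*$, hence $A(X,Y,Z)=A(Y,X,Z)$; and (iv) a tensor symmetric in its first two slots and antisymmetric in its last two vanishes identically by the braid argument. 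So your write-up is not merely consistent with the paper --- it fills in a proof the paper omits.
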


\begin{lemma}
Let $F$ be a Riemannian map from a Riemannian manifold $(M,g_M)$ to a Riemannian manifold $(N,g_N)$. Then
the map $F$ satisfies a generalized eikonal equation \cite{F}
$$
2e(F) = ||F_*||^2 = \rank F.
$$
\end{lemma}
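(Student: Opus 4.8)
The plan is to observe that the first equality is simply the definition of the energy density. Recall that for a smooth map $F$ the energy density is $e(F) = \frac{1}{2}\,\mathrm{trace}_{g_M}(F^*g_N) = \frac{1}{2}||F_*||^2$, where $||F_*||^2$ denotes the Hilbert--Schmidt norm of the differential measured by $g_M$ on the domain and $g_N$ on the target. Thus $2e(F) = ||F_*||^2$ holds for any $C^\infty$-map, and the entire content of the lemma is the second equality $||F_*||^2 = \rank F$, which is where the Riemannian map hypothesis enters.

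To prove $||F_*||^2 = \rank F$ pointwise, I would fix $p \in M$ and choose an orthonormal basis $\{e_1,\dots,e_m\}$ of $T_pM$ adapted to the decomposition $T_pM = \ker (F_*)_p \oplus (\ker (F_*)_p)^\perp$. Setting $r := \rank (F_*)_p = \dim (\ker (F_*)_p)^\perp$, I arrange the basis so that $\{e_1,\dots,e_r\}$ spans $(\ker (F_*)_p)^\perp$ and $\{e_{r+1},\dots,e_m\}$ spans $\ker (F_*)_p$. By definition,
$$
||F_*||^2 = \sum_{i=1}^m (g_N)_{F(p)}\big((F_*)_p e_i, (F_*)_p e_i\big).
$$
For the indices $i > r$ the vectors $e_i$ lie in $\ker (F_*)_p$, so $(F_*)_p e_i = 0$ and these terms drop out. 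For $i \le r$, the defining property of a Riemannian map asserts that $(F_*)_p$ restricts to a linear isometry from $((\ker (F_*)_p)^\perp,(g_M)_p)$ onto $((range\,F_*)_{F(p)},(g_N)_{F(p)})$, so that $(g_N)_{F(p)}((F_*)_p e_i,(F_*)_p e_i) = (g_M)_p(e_i,e_i) = 1$. Summing the surviving $r$ terms gives $||F_*||^2 = r = \rank (F_*)_p$, which is exactly the pointwise rank of $F$.

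I do not expect any genuine obstacle, since the statement essentially reformulates the definition of a Riemannian map in terms of energy. The only point demanding a little care is the bookkeeping that exactly the horizontal directions contribute to the trace and that each contributes precisely $1$ by the isometry property; once the basis is adapted to $\ker (F_*)_p \oplus (\ker (F_*)_p)^\perp$, the computation of $\mathrm{trace}_{g_M}(F^*g_N)$ collapses to counting the dimension of $(\ker (F_*)_p)^\perp$, yielding $\rank F$.
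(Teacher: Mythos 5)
Your proof is correct. The paper itself gives no argument for this lemma, citing it as Fischer's generalized eikonal equation \cite{F}, and your computation --- the definitional identity $2e(F)=\|F_*\|^2$ plus the trace over an orthonormal basis adapted to $\ker(F_*)_p\oplus(\ker(F_*)_p)^{\perp}$, where the isometry property makes each of the $r=\rank(F_*)_p$ horizontal vectors contribute exactly $1$ --- is precisely the standard proof being invoked.
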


As we know, $||F_*||^2$ is a continuous function on $M$ and $\rank F$ is integer-valued so that
$\rank F$ is locally constant. Hence, if $M$ is connected, then $\rank F$ is a constant function \cite{F}.

\section{Semi-slant Riemannian maps}\label{semi}

\begin{definition}
Let $(M,g_M,J)$ be an almost Hermitian manifold and $(N,g_N)$ a
Riemannian manifold. A Riemannian map $F : (M,g_M,J)\mapsto
(N,g_N)$ is called a {\em semi-slant Riemannian map} if there is a
distribution $\mathcal{D}_1\subset \ker F_*$ such that
$$
\ker F_* =\mathcal{D}_1\oplus \mathcal{D}_2, \
J(\mathcal{D}_1)=\mathcal{D}_1,
$$
and the angle $\theta=\theta(X)$ between $JX$ and the space
$(\mathcal{D}_2)_p$ is constant for nonzero $X\in
(\mathcal{D}_2)_p$ and $p\in M$, where $\mathcal{D}_2$ is the
orthogonal complement of $\mathcal{D}_1$ in $\ker F_*$.
\end{definition}

We call the angle $\theta$ a {\em semi-slant angle}.

Let $F : (M,g_M,J)\mapsto (N,g_N)$ be a semi-slant
Riemannian map. Then there is a distribution $\mathcal{D}_1\subset
\ker F_*$ such that
$$
\ker F_* =\mathcal{D}_1\oplus \mathcal{D}_2, \
J(\mathcal{D}_1)=\mathcal{D}_1,
$$
and the angle $\theta=\theta(X)$ between $JX$ and the space
$(\mathcal{D}_2)_p$ is constant for nonzero $X\in
(\mathcal{D}_2)_p$ and $p\in M$, where $\mathcal{D}_2$ is the
orthogonal complement of $\mathcal{D}_1$ in $\ker F_*$.

Then for $X\in \Gamma(\ker F_*)$, we get
\begin{equation}\label{eq: proj1}
X = PX+QX,
\end{equation}
where $PX\in \Gamma(\mathcal{D}_1)$ and $QX\in
\Gamma(\mathcal{D}_2)$.

For $X\in \Gamma(\ker F_*)$, we write
\begin{equation}\label{eq: proj2}
JX = \phi X+\omega X,
\end{equation}
where $\phi X\in \Gamma(\ker F_*)$ and $\omega X\in \Gamma((\ker
F_*)^{\perp})$.

For $Z\in \Gamma((\ker F_*)^{\perp})$, we have
\begin{equation}\label{eq: proj3}
JZ = BZ+CZ,
\end{equation}
where $BZ\in \Gamma(\ker F_*)$ and $CZ\in \Gamma((\ker
F_*)^{\perp})$.

For $U\in \Gamma(TM)$, we obtain
\begin{equation}\label{eq: proj4}
U = \mathcal{V}U+\mathcal{H}U,
\end{equation}
where $\mathcal{V}U\in \Gamma(\ker F_*)$ and $\mathcal{H}U\in
\Gamma((\ker F_*)^{\perp})$.

For $W\in \Gamma(F^{-1}TN)$, we write
\begin{equation}\label{eq: proj5}
W = \bar{P}W+\bar{Q}W,
\end{equation}
where $\bar{P}W\in \Gamma(range F_*)$ and $\bar{Q}W\in \Gamma((range F_*)^{\perp})$.

Then
\begin{equation}\label{eq: proj6}
(\ker F_*)^{\perp} = \omega \mathcal{D}_2 \oplus \mu,
\end{equation}
where $\mu$ is the orthogonal complement of $\omega \mathcal{D}_2$
in $(\ker F_*)^{\perp}$ and is invariant  under $J$.

Furthermore,
{\setlength\arraycolsep{2pt}
\begin{eqnarray*}
& & \phi \mathcal{D}_1 = \mathcal{D}_1, \omega \mathcal{D}_1 = 0, \phi \mathcal{D}_2 \subset \mathcal{D}_2,
B((\ker F_*)^{\perp}) = \mathcal{D}_2        \\
& & \phi^2+B\omega = -id, C^2+\omega B = -id, \omega \phi +C\omega
= 0, BC+\phi B = 0.
\end{eqnarray*}}

Define the tensors $\mathcal{T}$ and $\mathcal{A}$ by
\begin{eqnarray}
  \mathcal{A}_E F & = & \mathcal{H}\nabla_{\mathcal{H}E} \mathcal{V}F+\mathcal{V}\nabla_{\mathcal{H}E} \mathcal{H}F \label{eq: oten1} \\
   \mathcal{T}_E F & = & \mathcal{H}\nabla_{\mathcal{V}E} \mathcal{V}F+\mathcal{V}\nabla_{\mathcal{V}E} \mathcal{H}F \label{eq: oten2}
\end{eqnarray}
for $E, F\in \Gamma(TM)$, where $\nabla$ is the Levi-Civita
connection of $g_M$.

For $X,Y\in \Gamma(\ker F_*)$, define
\begin{equation}\label{eq: vert}
\widehat{\nabla}_X Y := \mathcal{V}\nabla_X Y
\end{equation}
\begin{eqnarray}
(\nabla_X \phi)Y & := & \widehat{\nabla}_X \phi Y-\phi
\widehat{\nabla}_X Y \label{eq: vertc}  \\
(\nabla_X \omega)Y & := & \mathcal{H}\nabla_X \omega
Y-\omega\widehat{\nabla}_X Y. \label{eq: horizc}
\end{eqnarray}
Then it is easy to obtain

\begin{lemma}
Let $(M,g_M,J)$ be a K\"{a}hler manifold and $(N,g_N)$ a
Riemannian manifold. Let $F : (M,g_M,J) \mapsto (N,g_N)$ be a
semi-slant Riemannian map. Then we get
\begin{enumerate}
\item
\begin{align*}
  &\widehat{\nabla}_X \phi Y+\mathcal{T}_X \omega Y = \phi\widehat{\nabla}_X Y+B\mathcal{T}_X Y    \\
  &\mathcal{T}_X \phi Y+\mathcal{H}\nabla_X \omega Y =
  \omega\widehat{\nabla}_X Y+C\mathcal{T}_X Y
\end{align*}
for $X,Y\in \Gamma(\ker F_*)$.
\item
\begin{align*}
  &\mathcal{V}\nabla_Z BW+\mathcal{A}_Z CW = \phi\mathcal{A}_Z W+B\mathcal{H}\nabla_Z W    \\
  &\mathcal{A}_Z BW+\mathcal{H}\nabla_Z CW = \omega\mathcal{A}_Z
  W+C\mathcal{H}\nabla_Z W
\end{align*}
for $Z,W\in \Gamma((\ker F_*)^{\perp})$.
\item
\begin{align*}
  &\widehat{\nabla}_X BZ+\mathcal{T}_X CZ = \phi\mathcal{T}_X Z+B\mathcal{H}\nabla_X Z    \\
  &\mathcal{T}_X BZ+\mathcal{H}\nabla_X CZ = \omega\mathcal{T}_X Z+C\mathcal{H}\nabla_X Z  \\
  &\mathcal{V}\nabla_Z \phi X + \mathcal{A}_Z \omega X = \phi \mathcal{V}\nabla_Z X + B\mathcal{A}_Z X     \\
  &\mathcal{A}_Z \phi X + \mathcal{H}\nabla_Z \omega X = \omega\mathcal{V}\nabla_Z X + C\mathcal{A}_Z X
\end{align*}
for $X\in \Gamma(\ker F_*)$ and $Z\in \Gamma((\ker F_*)^{\perp})$.
\end{enumerate}
\end{lemma}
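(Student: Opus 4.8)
The plan is to derive all four groups of identities by a single uniform mechanism: use the K\"ahler condition $\nabla J = 0$, i.e.\ $\nabla_E (JU) = J(\nabla_E U)$ for all $E, U \in \Gamma(TM)$, together with the decompositions (\ref{eq: proj2}), (\ref{eq: proj3}) of $J$ into its tangential and normal pieces and the definitions (\ref{eq: oten1}), (\ref{eq: oten2}) of the O'Neill tensors. The only preliminary facts needed are the standard reductions of $\mathcal{T}$ and $\mathcal{A}$ on pure vector fields, all immediate from (\ref{eq: oten1}), (\ref{eq: oten2}): for $X, Y \in \Gamma(\ker F_*)$ one has $\mathcal{T}_X Y = \mathcal{H}\nabla_X Y$ and $\widehat{\nabla}_X Y = \mathcal{V}\nabla_X Y$; for $X \in \Gamma(\ker F_*)$ and a horizontal field $V$, $\mathcal{T}_X V = \mathcal{V}\nabla_X V$; for $Z \in \Gamma((\ker F_*)^{\perp})$ and a vertical field $U$, $\mathcal{A}_Z U = \mathcal{H}\nabla_Z U$; and for horizontal $Z, V$, $\mathcal{A}_Z V = \mathcal{V}\nabla_Z V$.

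For part (1), I would take $X, Y \in \Gamma(\ker F_*)$ and expand $\nabla_X(JY)$ in two ways. Writing $JY = \phi Y + \omega Y$ and differentiating, the term $\nabla_X(\phi Y)$ splits as $\widehat{\nabla}_X \phi Y + \mathcal{T}_X \phi Y$ (as $\phi Y$ is vertical), while $\nabla_X(\omega Y)$ splits as $\mathcal{T}_X \omega Y + \mathcal{H}\nabla_X \omega Y$ (as $\omega Y$ is horizontal). On the other side, $J\nabla_X Y = J(\widehat{\nabla}_X Y + \mathcal{T}_X Y)$; applying (\ref{eq: proj2}) to the vertical part $\widehat{\nabla}_X Y$ and (\ref{eq: proj3}) to the horizontal part $\mathcal{T}_X Y$ yields $\phi\widehat{\nabla}_X Y + \omega\widehat{\nabla}_X Y + B\mathcal{T}_X Y + C\mathcal{T}_X Y$. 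Equating the two expressions and separating vertical from horizontal components gives exactly the two identities of (1).

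The remaining parts follow the same template with different slots for the arguments. For (2), apply $\nabla_Z(JW) = J\nabla_Z W$ with $Z, W$ horizontal, using $JW = BW + CW$ on the left and $J(\mathcal{A}_Z W + \mathcal{H}\nabla_Z W)$ on the right. For the two identities in (3) involving $BZ, CZ$, apply $\nabla_X(JZ) = J\nabla_X Z$ with $X$ vertical and $Z$ horizontal; for the remaining two, apply $\nabla_Z(JX) = J\nabla_Z X$ with the vertical and horizontal roles swapped, so that $JX = \phi X + \omega X$ is differentiated on the left and $J(\mathcal{V}\nabla_Z X + \mathcal{A}_Z X)$ appears on the right. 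In each case one writes $J(\,\cdot\,)$ via (\ref{eq: proj2})--(\ref{eq: proj3}), differentiates, replaces each vertical or horizontal projection of a covariant derivative by the matching $\mathcal{T}$- or $\mathcal{A}$-term via the reductions above, and reads off the two component equations.

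There is no genuine obstacle here; the content is entirely the K\"ahler identity plus the bookkeeping of projections. The one point demanding care---and essentially the only place an error can creep in---is matching each raw projection $\mathcal{V}\nabla$ or $\mathcal{H}\nabla$ to the correct O'Neill tensor, since whether $\mathcal{T}_E F$ or $\mathcal{A}_E F$ records the vertical or the horizontal part depends on whether the \emph{second} argument is vertical or horizontal. Keeping the five reductions listed above at hand and consulting them at each step renders the whole computation routine.
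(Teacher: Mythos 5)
Your proposal is correct, and it is exactly the argument the paper intends: the paper states this lemma without proof (``Then it is easy to obtain''), and the standard derivation is precisely your computation --- apply the K\"ahler identity $\nabla_E(JU)=J\nabla_E U$, expand both sides via the decompositions (\ref{eq: proj2})--(\ref{eq: proj3}) and the reductions of $\mathcal{T}$, $\mathcal{A}$, $\widehat{\nabla}$ on pure vertical/horizontal fields, and compare vertical and horizontal components. All five of your reduction identities and all four case analyses check out against the definitions (\ref{eq: oten1}), (\ref{eq: oten2}), (\ref{eq: vert}).
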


Let $F$ be a slant Riemannian map from an almost Hermitian manifold $(M,g_M,J)$ to a Riemannian manifold $(N,g_N)$ with
the slant angle $\theta$ \cite{S4}. Then given non-vanishing $X\in \Gamma(\ker F_*)$, we have
$$
\cos \theta = \frac{|\phi X|}{|JX|} \quad \text{and} \quad \cos \theta = \frac{g_M(JX, \phi X)}{|JX|\cdot|\phi X|} = \frac{-g_M(X, \phi^2 X)}{|X|\cdot|\phi X|}
$$
so that
$$
\cos^2 \theta = \frac{-g_M(X, \phi^2 X)}{|X|^2},
$$
which means
\begin{equation}\label{eq: angle}
\phi^2 X = -\cos^2 \theta\cdot X.
\end{equation}
Furthermore, if $(M,g_M,J)$ is K\"{a}hler, then it is easy to get
\begin{eqnarray}
  (\nabla_X \omega)Y & = & C\mathcal{T}_X Y-\mathcal{T}_X \phi Y  \label{eq: vertc2}   \\
  (\nabla_X \phi)Y & = & B\mathcal{T}_X Y-\mathcal{T}_X \omega Y  \label{eq: horizc2}
\end{eqnarray}
for $X,Y\in \Gamma(\ker F_*)$.
Assume that the tensor $\omega$ is parallel.

Then
$$
C\mathcal{T}_X Y = \mathcal{T}_X \phi Y \quad \text{for} \ X,Y\in \Gamma(\ker F_*)
$$
so that interchanging the role of $X$ and $Y$,
$$
C\mathcal{T}_Y X = \mathcal{T}_Y \phi X \quad \text{for} \ X,Y\in \Gamma(\ker F_*).
$$
Hence,
$$
\mathcal{T}_X \phi Y = \mathcal{T}_Y \phi X  \quad \text{for} \ X,Y\in \Gamma(\ker F_*).
$$
Substituting $Y$ by $\phi X$ and using (\ref{eq: angle}),
\begin{equation}\label{eq: angle2}
\mathcal{T}_{\phi X} \phi X = -\cos^2 \theta \cdot \mathcal{T}_X X \quad \text{for} \ X\in \Gamma(\ker F_*).
\end{equation}
Similarly, we have

\begin{theorem}\label{thm: angle}
Let $F$ be a semi-slant Riemannian map from an almost Hermitian manifold $(M,g_M,J)$ to a Riemannian manifold $(N,g_N)$ with
the semi-slant angle $\theta$. Then we obtain
\begin{equation}\label{eq: angle3}
\phi^2 X = -\cos^2 \theta\cdot X \quad \text{for} \ X\in \Gamma(\mathcal{D}_2).
\end{equation}
\end{theorem}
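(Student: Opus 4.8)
The plan is to mirror the computation carried out just above for the slant case leading to~(\ref{eq: angle}), but restricted to the subbundle $\mathcal{D}_2$, paying attention to the one place where the semi-slant geometry genuinely differs from the purely slant one.

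First I would fix a non-vanishing $X\in\Gamma(\mathcal{D}_2)$ and identify the orthogonal projection of $JX$ onto $(\mathcal{D}_2)_p$. Writing $JX=\phi X+\omega X$ as in~(\ref{eq: proj2}), I have $\phi X\in\Gamma(\ker F_*)$ and $\omega X\in\Gamma((\ker F_*)^{\perp})$. The key structural fact is $\phi\mathcal{D}_2\subset\mathcal{D}_2$, listed among the displayed identities, which forces $\phi X\in\Gamma(\mathcal{D}_2)$; in particular $JX$ has no $\mathcal{D}_1$-component, so $\phi X$ is \emph{exactly} the orthogonal projection of $JX$ onto $(\mathcal{D}_2)_p$. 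Since the semi-slant angle $\theta$ is by definition the angle between $JX$ and $(\mathcal{D}_2)_p$, this yields $\cos\theta=|\phi X|/|JX|=|\phi X|/|X|$, using that $J$ is $g_M$-compatible so that $|JX|=|X|$.

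Next I would compute $\cos\theta$ the second way, as $\cos\theta=g_M(JX,\phi X)/(|JX|\,|\phi X|)$. Applying skew-symmetry of $J$ and then~(\ref{eq: proj2}) to $\phi X\in\Gamma(\mathcal{D}_2)\subset\Gamma(\ker F_*)$, I get $g_M(JX,\phi X)=-g_M(X,J\phi X)=-g_M(X,\phi^2 X+\omega\phi X)=-g_M(X,\phi^2 X)$, the last step because $\omega\phi X$ is horizontal and hence orthogonal to $X$. Multiplying the two expressions for $\cos\theta$ gives $\cos^2\theta=-g_M(X,\phi^2 X)/|X|^2$ for every non-vanishing $X\in\Gamma(\mathcal{D}_2)$.

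Finally I would upgrade this quadratic identity to the operator identity~(\ref{eq: angle3}). Since $\phi$ is skew-symmetric on $\ker F_*$, because $g_M(\phi X,Y)=g_M(JX,Y)=-g_M(X,JY)=-g_M(X,\phi Y)$ for $X,Y\in\Gamma(\ker F_*)$ (the $\omega$-terms drop out as they are horizontal), and since $\phi^2$ preserves $\mathcal{D}_2$, the restriction $\phi^2|_{\mathcal{D}_2}$ is self-adjoint and diagonalizable; for any eigenvector the quadratic identity forces the eigenvalue to equal $-\cos^2\theta$, and constancy of $\theta$ makes this uniform, so $\phi^2=-\cos^2\theta\cdot id$ on $\mathcal{D}_2$. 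The only genuinely new point compared with the slant case is the identification of $\phi X$ as the projection onto $\mathcal{D}_2$, which is precisely why the conclusion holds on $\mathcal{D}_2$ rather than on all of $\ker F_*$; I expect the remaining passage from the quadratic form to the operator identity to be the routine self-adjointness/eigenvalue argument that the author abbreviates as ``which means.''
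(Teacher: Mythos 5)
Your proposal is correct and takes essentially the same approach as the paper: the paper's entire proof is the phrase ``Similarly, we have,'' pointing back to the slant-case computation leading to (\ref{eq: angle}), which is precisely the computation you carry out on $\mathcal{D}_2$. The two points you elaborate---that $\phi\mathcal{D}_2\subset\mathcal{D}_2$ makes $\phi X$ the orthogonal projection of $JX$ onto $(\mathcal{D}_2)_p$, and the skew-symmetry/eigenvalue argument upgrading the quadratic identity $\cos^2\theta=-g_M(X,\phi^2X)/|X|^2$ to the operator identity (\ref{eq: angle3})---are exactly the details the paper leaves implicit in its ``which means'' step.
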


\begin{remark}
It is easy to check that the converse of Theorem \ref {thm: angle} is also true.
\end{remark}

Since
\begin{eqnarray*}
g_M(\phi X, \phi Y) & = & \cos^2 \theta g_M(X, Y)  \\
g_M(\omega X, \omega Y) & = & \sin^2 \theta g_M(X, Y)
\end{eqnarray*}
for $ X,Y\in \Gamma(\mathcal{D}_2)$, when $\displaystyle{\theta\in (0, \frac{\pi}{2})}$, we can locally choose an orthonormal frame
$\{ e_1, Je_1, \cdots, e_k, Je_k, f_1, \sec \theta \phi f_1, \csc \theta \omega f_1, \cdots, f_s, \sec \theta \phi f_s,
\csc \theta \omega f_s, g_1, Jg_1, \cdots, g_t,$ $Jg_t \}$
of $TM$ such that $\{ e_1, Je_1, \cdots, e_k, Je_k \}$ is an orthonormal frame
of $\mathcal{D}_1$,  $\{ f_1,$ $\sec \theta \phi f_1, \cdots, f_s, \sec \theta \phi f_s \}$ an orthonormal frame
of $\mathcal{D}_2$, $\{ \csc \theta \omega f_1, \cdots, \csc \theta \omega f_s \}$ an orthonormal frame
of $\omega\mathcal{D}_2$, and $\{ g_1, Jg_1, \cdots, g_t, Jg_t \}$ an orthonormal frame
of $\mu$.

In a similar way, we have

\begin{lemma}\label{lem: angle}
Let $F$ be a semi-slant Riemannian map from a K\"{a}hler manifold $(M,g_M,J)$ to a Riemannian manifold $(N,g_N)$ with
the semi-slant angle $\theta$. If the tensor $\omega$ is parallel, then we get
\begin{equation}\label{eq: angle4}
\mathcal{T}_{\phi X} \phi X = -\cos^2 \theta \cdot \mathcal{T}_X X \quad \text{for} \ X\in \Gamma(\mathcal{D}_2).
\end{equation}
\end{lemma}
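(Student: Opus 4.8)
The plan is to reproduce, step for step, the computation that established (\ref{eq: angle2}) in the slant case, but to confine all of the vector fields to the slant part $\mathcal{D}_2$ of the vertical distribution and to invoke Theorem \ref{thm: angle} in place of (\ref{eq: angle}). The point that makes this legitimate is the inclusion $\phi\mathcal{D}_2\subset\mathcal{D}_2$ recorded among the structural identities following (\ref{eq: proj6}): it guarantees that $\phi X,\phi Y\in\Gamma(\mathcal{D}_2)\subset\Gamma(\ker F_*)$ whenever $X,Y\in\Gamma(\mathcal{D}_2)$, so that every tensor expression remains well defined as the substitutions are made.

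First I would observe that the identities (\ref{eq: vertc2}) and (\ref{eq: horizc2}) are not special to the slant setting: they follow directly from part (1) of the preceding Lemma together with the definitions (\ref{eq: vertc}), (\ref{eq: horizc}) and the K\"{a}hler hypothesis, and hence hold for an arbitrary K\"{a}hler semi-slant Riemannian map and all $X,Y\in\Gamma(\ker F_*)$. Since $\omega$ is assumed parallel, $(\nabla_X\omega)Y=0$, so (\ref{eq: vertc2}) collapses to
\begin{equation*}
C\mathcal{T}_X Y=\mathcal{T}_X\phi Y\qquad\text{for }X,Y\in\Gamma(\ker F_*),
\end{equation*}
in particular for $X,Y\in\Gamma(\mathcal{D}_2)$; interchanging the roles of $X$ and $Y$ gives $C\mathcal{T}_Y X=\mathcal{T}_Y\phi X$.

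Next I would invoke the symmetry of the tensor $\mathcal{T}$ on the vertical distribution, namely $\mathcal{T}_X Y=\mathcal{T}_Y X$ for $X,Y\in\Gamma(\ker F_*)$, which comes from $\mathcal{T}_X Y=\mathcal{H}\nabla_X Y$ together with the integrability of $\ker F_*$ (the fibers of $F$), so that $\mathcal{T}_X Y-\mathcal{T}_Y X=\mathcal{H}[X,Y]=0$. Applying $C$ to this equality and comparing with the two displayed relations yields $\mathcal{T}_X\phi Y=\mathcal{T}_Y\phi X$ for $X,Y\in\Gamma(\mathcal{D}_2)$. Finally, specializing to $Y=\phi X$ (which lies in $\Gamma(\mathcal{D}_2)$ by $\phi\mathcal{D}_2\subset\mathcal{D}_2$) and substituting $\phi^2 X=-\cos^2\theta\cdot X$ from Theorem \ref{thm: angle} produces
\begin{equation*}
\mathcal{T}_{\phi X}\phi X=\mathcal{T}_X\phi^2 X=-\cos^2\theta\cdot\mathcal{T}_X X,
\end{equation*}
which is exactly (\ref{eq: angle4}).

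I expect the only genuine subtlety, and hence the step to get right, to be the bookkeeping of domains rather than any hard estimate. In the semi-slant case the relation $\phi^2=-\cos^2\theta\cdot\mathrm{id}$ holds only on $\mathcal{D}_2$ (on $\mathcal{D}_1$ one instead has $\phi^2=-\mathrm{id}$ and $\omega=0$), so one must check at each stage that $X,Y$ and the derived fields $\phi X,\phi Y$ remain inside $\mathcal{D}_2$ before the angle identity of Theorem \ref{thm: angle} may be applied. The invariance $\phi\mathcal{D}_2\subset\mathcal{D}_2$ is precisely what closes this gap, and once it is in place the argument is otherwise a direct transcription of the slant computation already displayed before the statement.
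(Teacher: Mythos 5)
Your proof is correct and is essentially the paper's own argument: the paper proves the lemma by the remark ``In a similar way, we have,'' meaning precisely the slant-case computation (parallel $\omega$ gives $C\mathcal{T}_X Y=\mathcal{T}_X\phi Y$, symmetry of $\mathcal{T}$ gives $\mathcal{T}_X\phi Y=\mathcal{T}_Y\phi X$, then substitute $Y=\phi X$ and apply $\phi^2=-\cos^2\theta\cdot\mathrm{id}$ on $\mathcal{D}_2$ from Theorem \ref{thm: angle}), which is exactly what you did. Your explicit bookkeeping that $\phi\mathcal{D}_2\subset\mathcal{D}_2$ keeps all fields in the right distribution is a detail the paper leaves implicit, and it is the right point to check.
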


We now investigate the integrability of distributions. The proofs of the following Theorems are the same with those of
Theorem 2.3 and Theorem 2.4 in \cite {PP}.

\begin{theorem}
Let $F$ be a semi-slant Riemannian map from an almost Hermitian
manifold $(M,g_M,J)$ to a Riemannian manifold $(N,g_N)$. Then
the complex distribution $\mathcal{D}_1$ is integrable if and only
if we have
$$
\omega (\widehat{\nabla}_X Y-\widehat{\nabla}_Y X) = 0 \quad \text{for} \ X,Y\in
\Gamma(\mathcal{D}_1).
$$
\end{theorem}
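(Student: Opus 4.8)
The plan is to reduce the integrability of $\mathcal{D}_1$ to a single algebraic condition on the bracket of two sections of $\mathcal{D}_1$. By definition $\mathcal{D}_1$ is integrable precisely when $[X,Y]\in\Gamma(\mathcal{D}_1)$ for all $X,Y\in\Gamma(\mathcal{D}_1)$, so I would analyze in which components of the splitting $TM=\mathcal{D}_1\oplus\mathcal{D}_2\oplus(\ker F_*)^{\perp}$ the bracket $[X,Y]$ can fail to lie in $\mathcal{D}_1$. First I would observe that, since $\rank F$ is locally constant, the vertical distribution $\ker F_*$ is integrable, being the tangent distribution to the fibers of $F$; hence $[X,Y]\in\Gamma(\ker F_*)$ automatically, and the only possible obstruction to $[X,Y]\in\Gamma(\mathcal{D}_1)$ is a nonzero $\mathcal{D}_2$-component. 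In other words, it suffices to detect the $\mathcal{D}_2$-part of $[X,Y]$.

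Next I would rewrite the bracket in terms of the vertical connection $\widehat{\nabla}$. Because the Levi-Civita connection is torsion-free, $[X,Y]=\nabla_X Y-\nabla_Y X$; applying the vertical projection $\mathcal{V}$ and using (\ref{eq: vert}) gives $\mathcal{V}[X,Y]=\widehat{\nabla}_X Y-\widehat{\nabla}_Y X$. Since $[X,Y]$ is already vertical by the previous step, in fact $[X,Y]=\widehat{\nabla}_X Y-\widehat{\nabla}_Y X$, so the tensor $\omega$ of (\ref{eq: proj2}) may be applied directly to this expression.

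The key step is to characterize $\mathcal{D}_1$ inside $\ker F_*$ as the kernel of $\omega$. From $\omega\mathcal{D}_1=0$ together with the relation $g_M(\omega U,\omega V)=\sin^2\theta\,g_M(U,V)$ for $U,V\in\Gamma(\mathcal{D}_2)$, the map $\omega$ annihilates $\mathcal{D}_1$ and is injective on $\mathcal{D}_2$. Writing $[X,Y]=P[X,Y]+Q[X,Y]$ with $P[X,Y]\in\Gamma(\mathcal{D}_1)$ and $Q[X,Y]\in\Gamma(\mathcal{D}_2)$, we obtain $\omega[X,Y]=\omega(Q[X,Y])$, which vanishes if and only if $Q[X,Y]=0$, i.e. if and only if $[X,Y]\in\Gamma(\mathcal{D}_1)$. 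Combining this with the identity of the previous paragraph yields that $\omega(\widehat{\nabla}_X Y-\widehat{\nabla}_Y X)=\omega[X,Y]=0$ holds for all $X,Y\in\Gamma(\mathcal{D}_1)$ exactly when $\mathcal{D}_1$ is integrable, which is the assertion.

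The step requiring most care is this kernel characterization, specifically the injectivity of $\omega$ on $\mathcal{D}_2$: this is what guarantees that the vanishing of $\omega(\widehat{\nabla}_X Y-\widehat{\nabla}_Y X)$ is genuinely \emph{equivalent} to the vanishing of the $\mathcal{D}_2$-component of $[X,Y]$, rather than merely a necessary condition. For a proper semi-slant map, where $\theta\neq 0$ on $\mathcal{D}_2$, this injectivity is immediate from $g_M(\omega U,\omega V)=\sin^2\theta\,g_M(U,V)$; the remaining ingredients are the integrability of $\ker F_*$ and a routine projection argument, neither of which uses a K\"{a}hler hypothesis, consistent with the statement being made for an almost Hermitian manifold.
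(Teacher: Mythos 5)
Your proof is correct and takes essentially the same route as the paper's, which is given only by reference to Theorem 2.3 of \cite{PP}: integrability of $\ker F_*$ (constant rank), torsion-freeness of the Levi-Civita connection to identify $[X,Y]$ with $\widehat{\nabla}_X Y-\widehat{\nabla}_Y X$, and the fact that $\omega$ annihilates $\mathcal{D}_1$ while being injective on $\mathcal{D}_2$. Your explicit observation that this injectivity requires $\sin\theta\neq 0$ (so that the claimed equivalence, rather than just one implication, holds) is a subtlety that the paper and \cite{PP} leave implicit.
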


Similarly, we get

\begin{theorem}
Let $F$ be a semi-slant Riemannian map from an almost Hermitian
manifold $(M,g_M,J)$ to a Riemannian manifold $(N,g_N)$. Then
the slant distribution $\mathcal{D}_2$ is integrable if and only
if we obtain
$$
P(\phi(\widehat{\nabla}_X Y-\widehat{\nabla}_Y X)) = 0 \quad \text{for} \ X,Y\in
\Gamma(\mathcal{D}_2).
$$
\end{theorem}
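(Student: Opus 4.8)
The plan is to reduce the integrability of $\mathcal{D}_2$ to the vanishing of a single projected bracket and then observe that the stated $\phi$-criterion is an equivalent repackaging of that condition; as a preliminary I would check that the vertical distribution $\ker F_*$ is integrable, so that the bracket of two sections of $\mathcal{D}_2$ remains vertical. Indeed, for $X,Y\in\Gamma(\ker F_*)$ one has $F_*X=F_*Y=0$, whence (\ref{eq: second}) gives $(\nabla F_*)(X,Y)=-F_*(\nabla_X Y)$ and similarly with $X$ and $Y$ interchanged; the symmetry of the second fundamental form then yields $F_*([X,Y])=0$, i.e.\ $[X,Y]\in\Gamma(\ker F_*)$. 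Consequently, for $X,Y\in\Gamma(\mathcal{D}_2)\subset\Gamma(\ker F_*)$ the bracket equals its own vertical part, so by (\ref{eq: vert})
$$
[X,Y]=\mathcal{V}(\nabla_X Y-\nabla_Y X)=\widehat{\nabla}_X Y-\widehat{\nabla}_Y X .
$$

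Next I would use the decomposition $\ker F_*=\mathcal{D}_1\oplus\mathcal{D}_2$ together with (\ref{eq: proj1}). Since $[X,Y]$ is vertical, $\mathcal{D}_2$ is integrable exactly when its $\mathcal{D}_1$-component $P[X,Y]$ vanishes for all $X,Y\in\Gamma(\mathcal{D}_2)$. The one genuinely substantive step is to see that $P[X,Y]=0$ is equivalent to $P(\phi[X,Y])=0$. Writing $[X,Y]=P[X,Y]+Q[X,Y]$ and invoking the structure relations $\phi\mathcal{D}_1=\mathcal{D}_1$ and $\phi\mathcal{D}_2\subset\mathcal{D}_2$, the term $\phi(Q[X,Y])$ stays in $\mathcal{D}_2$ while $\phi(P[X,Y])$ stays in $\mathcal{D}_1$, so that $P(\phi[X,Y])=\phi(P[X,Y])$. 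Because $\omega\mathcal{D}_1=0$, the relation $\phi^2+B\omega=-id$ restricts on $\mathcal{D}_1$ to $\phi^2=-id$, so $\phi|_{\mathcal{D}_1}$ is a linear isomorphism; hence $\phi(P[X,Y])=0$ if and only if $P[X,Y]=0$. Chaining these equivalences and substituting the bracket formula from the first step produces exactly the asserted condition $P(\phi(\widehat{\nabla}_X Y-\widehat{\nabla}_Y X))=0$.

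I expect the only real point requiring care, rather than any conceptual difficulty, to be the bookkeeping in the last step: one must be certain that applying $\phi$ neither annihilates nor manufactures a $\mathcal{D}_1$-component, which is guaranteed precisely by $\phi\mathcal{D}_1=\mathcal{D}_1$, $\phi\mathcal{D}_2\subset\mathcal{D}_2$, and the invertibility of $\phi|_{\mathcal{D}_1}$. Since the argument never uses $\nabla J=0$, it applies to an arbitrary almost Hermitian target structure and runs parallel to the corresponding semi-slant submersion result in \cite{PP}.
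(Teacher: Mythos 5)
Your proposal is correct and takes essentially the same route as the paper, which simply defers to the proof of Theorem 2.4 in \cite{PP}: establish that $[X,Y]=\widehat{\nabla}_X Y-\widehat{\nabla}_Y X$ is vertical, reduce integrability of $\mathcal{D}_2$ to $P[X,Y]=0$, and then use $\phi\mathcal{D}_1=\mathcal{D}_1$, $\phi\mathcal{D}_2\subset\mathcal{D}_2$, and the invertibility of $\phi|_{\mathcal{D}_1}$ to see that $P[X,Y]=0$ is equivalent to $P(\phi[X,Y])=0$. Your write-up is complete as it stands, including the preliminary verification that $\ker F_*$ is integrable and the observation that no K\"{a}hler condition is needed.
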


Given a semi-slant Riemannian map $F$ from an almost Hermitian
manifold $(M,g_M,J)$ to a Riemannian manifold $(N,g_N)$ with
the semi-slant angle $\displaystyle{\theta\in [0, \frac{\pi}{2})}$, we define an endomorphism
$\widehat{J}$ of $\ker F_*$ by
$$
\widehat{J} := JP+\sec \theta \phi Q.
$$
Then
\begin{equation} \label{eq: compst}
{\widehat{J}}^2 = -id \quad \text{on} \ \ker F_*.
\end{equation}
Note that the distribution $\ker F_*$ is integrable and does not need to be invariant under the almost complex structure $J$.
Furthermore, its dimension may be odd. But with the endomorphism $\widehat{J}$ we have

\begin{theorem}
Let $F$ be a semi-slant Riemannian map from an almost Hermitian
manifold $(M,g_M,J)$ to a Riemannian manifold $(N,g_N)$ with
the semi-slant angle $\displaystyle{\theta\in [0, \frac{\pi}{2})}$.
Then the fibers $(F^{-1}(x), \widehat{J})$ are almost complex manifolds for $x\in M$.
\end{theorem}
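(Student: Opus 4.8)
The plan is to show that on each fiber $F^{-1}(x)$ the endomorphism $\widehat{J}$ restricts to a genuine almost complex structure, i.e.\ a smooth $(1,1)$-tensor field squaring to $-\mathrm{id}$. First I would recall that, since $F$ is a Riemannian map, its rank is locally constant; hence by the constant rank theorem each level set $F^{-1}(x)$ is an embedded submanifold of $M$ whose tangent space at a point $p$ is precisely $(\ker F_*)_p$. Thus it suffices to verify that $\widehat{J}=JP+\sec\theta\,\phi Q$ is a well-defined smooth endomorphism of the bundle $\ker F_*$ satisfying $\widehat{J}^2=-\mathrm{id}$; its restriction to each fiber is then automatically a $(1,1)$-tensor field on that fiber.

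The first key step is to check that $\widehat{J}$ maps $\ker F_*$ into itself. Writing $X=PX+QX$ as in (\ref{eq: proj1}) with $PX\in\Gamma(\mathcal{D}_1)$ and $QX\in\Gamma(\mathcal{D}_2)$, I would use $J(\mathcal{D}_1)=\mathcal{D}_1\subset\ker F_*$ to get $JPX\in\Gamma(\mathcal{D}_1)$, and the relation $\phi\mathcal{D}_2\subset\mathcal{D}_2$ recorded just after (\ref{eq: proj6}) to get $\phi QX\in\Gamma(\mathcal{D}_2)\subset\ker F_*$. Hence $\widehat{J}X\in\Gamma(\ker F_*)$, with $P\widehat{J}X=JPX$ and $Q\widehat{J}X=\sec\theta\,\phi QX$. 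Smoothness is clear, since $J$, the orthogonal projections $P,Q$, the tensor $\phi$ and the constant $\sec\theta$ are all smooth.

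The second step is the identity $\widehat{J}^2=-\mathrm{id}$, which is exactly (\ref{eq: compst}) but worth recording explicitly. Applying $\widehat{J}$ twice and using the splitting from the previous step gives $\widehat{J}^2X=J^2PX+\sec^2\theta\,\phi^2QX$. Here $J^2PX=-PX$ because $J$ is an almost complex structure, while Theorem \ref{thm: angle} yields $\phi^2QX=-\cos^2\theta\,QX$ for $QX\in\Gamma(\mathcal{D}_2)$; since $\sec^2\theta\cos^2\theta=1$, the second term collapses to $-QX$, so $\widehat{J}^2X=-PX-QX=-X$. This is precisely where the hypothesis $\theta\in[0,\frac{\pi}{2})$ enters, guaranteeing that $\sec\theta$ is finite and that the slant relation of Theorem \ref{thm: angle} applies.

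Finally I would conclude: at each $p\in F^{-1}(x)$ the linear map $\widehat{J}_p\colon(\ker F_*)_p\to(\ker F_*)_p$ satisfies $\widehat{J}_p^2=-\mathrm{id}$ and depends smoothly on $p$, so it defines an almost complex structure on the submanifold $F^{-1}(x)$; hence $(F^{-1}(x),\widehat{J})$ is an almost complex manifold. The only genuinely substantive points are the invariance $\widehat{J}(\ker F_*)\subset\ker F_*$ and the slant-angle identity forcing $\widehat{J}^2=-\mathrm{id}$; everything else is formal, so I expect no real obstacle beyond carefully tracking the $\mathcal{D}_1$/$\mathcal{D}_2$ decomposition.
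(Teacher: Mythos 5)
Your proposal is correct and follows essentially the same route the paper intends: the paper states the identity $\widehat{J}^2=-\mathrm{id}$ on $\ker F_*$ (its equation (3.13)) immediately after defining $\widehat{J}=JP+\sec\theta\,\phi Q$, and the theorem is presented as a direct consequence, exactly as in your argument. You have merely made explicit the details the paper leaves implicit --- that the fibers are submanifolds with tangent bundle $\ker F_*$, that $\widehat{J}$ preserves $\ker F_*$ via $J(\mathcal{D}_1)=\mathcal{D}_1$ and $\phi\mathcal{D}_2\subset\mathcal{D}_2$, and that the slant relation $\phi^2=-\cos^2\theta\,\mathrm{id}$ on $\mathcal{D}_2$ together with finiteness of $\sec\theta$ forces $\widehat{J}^2=-\mathrm{id}$.
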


We deal with the harmonicity of a map $F$. Given a $C^{\infty}$-map $F$ from a Riemannian manifold $(M,g_M)$ to a Riemannian manifold
$(N,g_N)$, we can naturally define a function $e(F) : M\mapsto [0, \infty)$ given by
$$
e(F)(x) := \frac{1}{2} |(F_*)_x|^2, \quad x\in M,
$$
where $|(F_*)_x|$ denotes the Hilbert-Schmidt norm of $(F_*)_x$ \cite{BW}. We call $e(F)$ the {\em energy density} of $F$.
Let $K$ be a compact domain of $M$, i.e., $K$ is the compact closure $\bar{U}$ of a non-empty connected open subset $U$ of $M$.
The {\em energy integral} of $F$ over $K$ is the integral of its energy density:
$$
E(F;K) := \int_K e(F) v_{g_M} = \frac{1}{2} \int_K |F_*|^2 v_{g_M},
$$
where $v_{g_M}$ is the volume form on $(M,g_M)$.
Let $C^{\infty}(M,N)$ denote the space of all $C^{\infty}$-maps from $M$ to $N$. A $C^{\infty}$-map $F : M\mapsto N$ is said to be
{\em harmonic} if it is a critical point of the energy functional $E(\ ;K) : C^{\infty}(M,N)\mapsto \mathbb{R}$ for any compact domain $K\subset M$.
By the result of J. Eells and J. Sampson \cite{ES}, we know that the map $F$ is harmonic if and only if the tension field $\tau (F) := trace \nabla F_* = 0$.

\begin{theorem}
Let $F$ be a semi-slant Riemannian map from a K\"{a}hler manifold $(M,g_M,J)$ to a Riemannian manifold $(N,g_N)$ such that
$\mathcal{D}_1$ is integrable. Then $F$ is harmonic if and only if $trace (\nabla F_*) = 0$ on $\mathcal{D}_2$ and $\widetilde{H} = 0$,
where $\widetilde{H}$ denotes the mean curvature vector field of $range F_*$.
\end{theorem}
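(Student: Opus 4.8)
The plan is to split the tension field $\tau(F)=\mathrm{trace}(\nabla F_*)$ according to the orthogonal decomposition $TM=\mathcal{D}_1\oplus\mathcal{D}_2\oplus(\ker F_*)^{\perp}$ and to show that the resulting pieces land in mutually orthogonal subbundles of $F^{-1}TN$, so that $\tau(F)=0$ forces each piece to vanish separately. First I would fix a local orthonormal frame adapted to this splitting, using in particular the $J$-adapted frame $\{e_1,Je_1,\dots,e_k,Je_k\}$ of the complex distribution $\mathcal{D}_1$ constructed above. For a vertical field $U\in\Gamma(\ker F_*)$ one has $F_*U=0$, whence $(\nabla F_*)(U,U)=-F_*(\nabla_U U)=-F_*(\mathcal{T}_U U)$, a section of $\mathrm{range}\,F_*$; for a horizontal field $X$, Lemma~\ref{lem: hori} gives $(\nabla F_*)(X,X)\in\Gamma((\mathrm{range}\,F_*)^{\perp})$, and summing over a horizontal frame $\{X_j\}$ identifies $\sum_j(\nabla F_*)(X_j,X_j)$ with $(\rank F)\,\widetilde{H}$, since $(\nabla F_*)(X_j,X_j)$ is exactly the second fundamental form of $\mathrm{range}\,F_*$ evaluated on $F_*X_j$. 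Thus $\tau(F)$ is the sum of a $\mathrm{range}\,F_*$-part coming from the fibers and a $(\mathrm{range}\,F_*)^{\perp}$-part equal to $(\rank F)\,\widetilde{H}$, and because these lie in orthogonal complements, $\tau(F)=0$ if and only if $\sum_i F_*(\mathcal{T}_{U_i}U_i)=0$ and $\widetilde{H}=0$.

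The next step is to separate the vertical contribution along $\mathcal{D}_1$ and $\mathcal{D}_2$, writing $\sum_i F_*(\mathcal{T}_{U_i}U_i)=\sum_a F_*(\mathcal{T}_{V_a}V_a)+\sum_b F_*(\mathcal{T}_{W_b}W_b)$ for orthonormal frames $\{V_a\}$ of $\mathcal{D}_1$ and $\{W_b\}$ of $\mathcal{D}_2$. Since $\mathrm{trace}(\nabla F_*)$ on $\mathcal{D}_2$ equals $-\sum_b F_*(\mathcal{T}_{W_b}W_b)$, the theorem will follow once I prove that integrability of $\mathcal{D}_1$ kills the $\mathcal{D}_1$-contribution, i.e. $\sum_a\mathcal{T}_{V_a}V_a=0$ (equivalently $\sum_a F_*(\mathcal{T}_{V_a}V_a)=0$, as $F_*$ is injective on horizontal vectors).

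This cancellation is the heart of the argument, and the main obstacle. Working pairwise on the $J$-adapted frame and using the K\"{a}hler condition $\nabla J=0$, I would compute $\mathcal{T}_{Je}Je=\mathcal{H}\nabla_{Je}(Je)=\mathcal{H}(J\nabla_{Je}e)$ and, via $\nabla_e e=-J\nabla_e(Je)$, also $\mathcal{T}_e e=-\mathcal{H}(J\nabla_e(Je))$; decomposing the vertical parts through $\phi,\omega$ and the horizontal parts through $B,C$ and taking horizontal projections yields $\mathcal{T}_e e+\mathcal{T}_{Je}Je=\omega(\widehat{\nabla}_{Je}e-\widehat{\nabla}_e Je)+C(\mathcal{T}_{Je}e-\mathcal{T}_e Je)$. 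The symmetry $\mathcal{T}_U V=\mathcal{T}_V U$ for vertical $U,V$, which holds because the fibers are integrable, annihilates the $C$-term, while the integrability criterion for $\mathcal{D}_1$, namely $\omega(\widehat{\nabla}_X Y-\widehat{\nabla}_Y X)=0$ for $X,Y\in\Gamma(\mathcal{D}_1)$ established earlier, annihilates the $\omega$-term with $X=e$, $Y=Je$. Summing over the frame gives $\sum_a\mathcal{T}_{V_a}V_a=0$, hence $\sum_i F_*(\mathcal{T}_{U_i}U_i)=-\mathrm{trace}(\nabla F_*)|_{\mathcal{D}_2}$, and combining this with the first paragraph produces the asserted equivalence. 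The delicate point is purely the $\phi/\omega/B/C$ bookkeeping in this computation together with the recognition that the two surviving terms are precisely the ones controlled by the symmetry of $\mathcal{T}$ and by the integrability hypothesis; the remainder is routine.
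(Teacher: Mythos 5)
Your proposal is correct and follows essentially the same route as the paper: split $\tau(F)$ into its $\mathrm{range}\,F_*$ part (from the fibers) and $(\mathrm{range}\,F_*)^{\perp}$ part (equal to $l\widetilde{H}$, $l=\dim(\ker F_*)^{\perp}$) via Lemma~\ref{lem: hori}, then cancel the $\mathcal{D}_1$-contribution pairwise over a frame $\{e_i,Je_i\}$ using the K\"ahler condition and the integrability of $\mathcal{D}_1$. The only difference is bookkeeping: the paper performs the cancellation directly with $J$ and the bracket, writing $\nabla_{Je_i}Je_i=J(\nabla_{e_i}Je_i+[Je_i,e_i])$ and using $J[Je_i,e_i]\in\Gamma(\mathcal{D}_1)\subset\ker F_*$, whereas you route it through the $\phi,\omega,B,C$ decomposition, the symmetry of $\mathcal{T}$ on vertical vectors, and the $\omega$-criterion for integrability of $\mathcal{D}_1$ --- equivalent computations with the same underlying idea.
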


\begin{proof}
Using Lemma \ref{lem: hori}, we have $trace \nabla F_*|_{\ker F_*}\in \Gamma(range F_*)$ and $trace \nabla F_*|_{(\ker F_*)^{\perp}}$
$\in \Gamma((range F_*)^{\perp})$ so that
$$
trace (\nabla F_*) = 0 \quad \Leftrightarrow \quad trace \nabla F_*|_{\ker F_*} = 0 \ \text{and} \ trace \nabla F_*|_{(\ker F_*)^{\perp}} = 0.
$$
Since $\mathcal{D}_1$ is invariant under $J$, we can choose locally an orthonormal frame $\{ e_1, Je_1,$ $\cdots, e_k, Je_k \}$ of $\mathcal{D}_1$.
Using the integrability of the distribution $\mathcal{D}_1$,
\begin{eqnarray*}
(\nabla F_*)(Je_i,Je_i) & = & -F_* \nabla_{Je_i} Je_i = -F_* J(\nabla_{e_i} Je_i+[Je_i, e_i])    \\
                        & = & F_* \nabla_{e_i} e_i = -(\nabla F_*)(e_i,e_i) \quad \text{for} \ 1\leq i\leq k.
\end{eqnarray*}
Hence,
$$
trace \nabla F_*|_{\ker F_*} = 0 \quad \Leftrightarrow \quad trace \nabla F_*|_{\mathcal{D}_2} = 0.
$$
Moreover, it is easy to get that
$$
trace \nabla F_*|_{(\ker F_*)^{\perp}} = l\widetilde{H} \quad \text{for} \ l := \dim (\ker F_*)^{\perp}
$$
so that
$$
trace \nabla F_*|_{(\ker F_*)^{\perp}} = 0 \quad \Leftrightarrow \quad \widetilde{H} = 0.
$$
Therefore, we obtain the result.
\end{proof}

Using Lemma \ref {lem: angle}, we have

\begin{corollary}
Let $F$ be a semi-slant Riemannian map from a K\"{a}hler manifold $(M,g_M,J)$ to a Riemannian manifold $(N,g_N)$ such that
$\mathcal{D}_1$ is integrable and the semi-slant angle $\displaystyle{\theta\in [0, \frac{\pi}{2})}$.
Assume that the tensor $\omega$ is parallel.
Then $F$ is harmonic if and only if $\widetilde{H} = 0$.
\end{corollary}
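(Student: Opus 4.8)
The plan is to reduce everything to the immediately preceding Theorem, which already establishes that $F$ is harmonic if and only if $\operatorname{trace}(\nabla F_*) = 0$ on $\mathcal{D}_2$ and $\widetilde{H} = 0$. Hence it suffices to show that, once $\omega$ is parallel, the first condition $\operatorname{trace}(\nabla F_*)|_{\mathcal{D}_2} = 0$ is automatic; then harmonicity collapses to the single requirement $\widetilde{H} = 0$. So the whole task is to verify that the trace of the second fundamental form over the slant distribution vanishes identically under the stated hypotheses.

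First I would record the pointwise identity: for any vertical $X \in \Gamma(\ker F_*)$ one has $(\nabla F_*)(X,X) = -F_*(\nabla_X X) = -F_*(\mathcal{T}_X X)$. Indeed, $F_* X = 0$ kills the term $\nabla^F_X F_* X$ in \eqref{eq: second}; the vertical component $\widehat{\nabla}_X X$ of $\nabla_X X$ lies in $\ker F_*$ and is therefore annihilated by $F_*$; and for vertical fields $\mathcal{H}\nabla_X X = \mathcal{T}_X X$ directly from \eqref{eq: oten2}. Next I would use the orthonormal frame of $\mathcal{D}_2$ exhibited just before Lemma \ref{lem: angle}, namely $\{f_1, \sec\theta\,\phi f_1, \dots, f_s, \sec\theta\,\phi f_s\}$ when $\theta \in (0,\tfrac{\pi}{2})$; for $\theta = 0$ the distribution $\mathcal{D}_2$ is $J$-invariant and one uses a frame $\{f_1, \phi f_1, \dots\}$ with the identical computation, since then $\phi^2 = -\mathrm{id}$. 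The trace over $\mathcal{D}_2$ then breaks into the $s$ paired contributions
$$
(\nabla F_*)(f_j,f_j) + \sec^2\theta\,(\nabla F_*)(\phi f_j, \phi f_j) = -F_*(\mathcal{T}_{f_j} f_j) - \sec^2\theta\, F_*(\mathcal{T}_{\phi f_j}\phi f_j).
$$

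The concluding step is to invoke Lemma \ref{lem: angle}: substituting $\mathcal{T}_{\phi f_j}\phi f_j = -\cos^2\theta\,\mathcal{T}_{f_j} f_j$ from \eqref{eq: angle4}, the second summand becomes $+\sec^2\theta\cos^2\theta\, F_*(\mathcal{T}_{f_j} f_j) = F_*(\mathcal{T}_{f_j} f_j)$, which cancels the first summand exactly. Thus every pair contributes zero, $\operatorname{trace}(\nabla F_*)|_{\mathcal{D}_2} = 0$, and combining with the preceding Theorem gives that $F$ is harmonic if and only if $\widetilde{H} = 0$.

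The one genuinely delicate point — everything else being bookkeeping — is that the normalizing factor $\sec\theta$ built into the frame of $\mathcal{D}_2$ is precisely the reciprocal of the $\cos\theta$ that appears (squared) in Lemma \ref{lem: angle}, so the two contributions cancel \emph{identically} rather than merely proportionally. I would therefore take care to confirm that $\{f_j, \sec\theta\,\phi f_j\}$ is indeed orthonormal, using the relation $g_M(\phi X, \phi Y) = \cos^2\theta\, g_M(X,Y)$ recorded just before the frame is introduced, which is exactly what forces the coefficient $\sec^2\theta$ to be the correct one and makes the cancellation work.
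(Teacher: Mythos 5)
Your proposal is correct and is essentially the paper's own argument: the paper proves this corollary with the single line ``Using Lemma \ref{lem: angle}, we have,'' i.e., it combines the preceding harmonicity theorem with the identity $\mathcal{T}_{\phi f_j}\phi f_j = -\cos^2\theta\,\mathcal{T}_{f_j}f_j$ applied in the adapted orthonormal frame of $\mathcal{D}_2$, so that the paired contributions cancel and $\operatorname{trace}(\nabla F_*)|_{\mathcal{D}_2}=0$ holds automatically. Your write-up simply fills in the bookkeeping (the reduction $(\nabla F_*)(X,X)=-F_*(\mathcal{T}_X X)$, the $\sec^2\theta$ normalization, and the $\theta=0$ case) that the paper leaves implicit.
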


We now study the condition for such a map $F$ to be totally geodesic.

\begin{theorem}
Let $F$ be a  semi-slant Riemannian map from a K\"{a}hler manifold
$(M,g_M,J)$ to a Riemannian manifold $(N,g_N)$. Then $F$ is a
totally geodesic map if and only if
\begin{align*}
  &\omega(\widehat{\nabla}_X \phi Y+\mathcal{T}_X \omega Y)+C(\mathcal{T}_X \phi Y
    +\mathcal{H}\nabla_X \omega Y) = 0   \\
  &\omega(\widehat{\nabla}_X BZ+\mathcal{T}_X CZ)+C(\mathcal{T}_X BZ
    +\mathcal{H}\nabla_X CZ) = 0   \\
  &\bar{Q}(\nabla_{Z_1}^F F_* Z_2) = 0
\end{align*}
for $X,Y\in \Gamma(\ker F_*)$ and $Z,Z_1,Z_2\in \Gamma((\ker F_*)^{\perp})$.
\end{theorem}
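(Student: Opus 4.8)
The plan is to unwind the definition: $F$ is totally geodesic precisely when $(\nabla F_*)(E,E')=0$ for all $E,E'\in\Gamma(TM)$. Since $(\nabla F_*)$ is symmetric and $C^{\infty}(M)$-bilinear, and since $TM=\ker F_*\oplus(\ker F_*)^{\perp}$, it is enough to test the vanishing on three types of pairs: both arguments in $\ker F_*$, one in $\ker F_*$ and one in $(\ker F_*)^{\perp}$, and both in $(\ker F_*)^{\perp}$. I expect these three cases to match the three displayed equations (in reverse order), so the work is to identify each case with its condition.

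I would dispose of the two outer cases first. For $Z_1,Z_2\in\Gamma((\ker F_*)^{\perp})$, Lemma \ref{lem: hori} gives $(\nabla F_*)(Z_1,Z_2)\in\Gamma((range F_*)^{\perp})$, so this term is fixed by $\bar{Q}$; as $F_*(\nabla_{Z_1}Z_2)\in\Gamma(range F_*)$ is killed by $\bar{Q}$, we get $(\nabla F_*)(Z_1,Z_2)=\bar{Q}(\nabla^F_{Z_1}F_*Z_2)$, whose vanishing is exactly the third equation. For $X,Y\in\Gamma(\ker F_*)$ we have $F_*Y=0$, hence $(\nabla F_*)(X,Y)=-F_*(\nabla_X Y)=-F_*(\mathcal{H}\nabla_X Y)$, and since $F_*$ is a linear isometry on $(\ker F_*)^{\perp}$ this is zero iff $\mathcal{H}\nabla_X Y=0$. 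To pin down $\mathcal{H}\nabla_X Y$ I would use the K\"ahler trick: $\nabla_X Y=-J\nabla_X JY$ by $\nabla J=0$ and $J^2=-\mathrm{id}$; expanding $JY=\phi Y+\omega Y$ via \eqref{eq: proj2}, splitting $\nabla_X\phi Y$ and $\nabla_X\omega Y$ into their $\widehat{\nabla}$, $\mathcal{T}$ and $\mathcal{H}\nabla$ components, and applying $J$ once more shows that $\mathcal{H}\nabla_X Y=0$ is equivalent to $\omega(\widehat{\nabla}_X\phi Y+\mathcal{T}_X\omega Y)+C(\mathcal{T}_X\phi Y+\mathcal{H}\nabla_X\omega Y)=0$, i.e. the first equation.

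The mixed case is where I expect the genuine difficulty. Running the same K\"ahler computation on $\nabla_X Z$ with $JZ=BZ+CZ$ from \eqref{eq: proj3}, and using the structural identities $\omega\phi+C\omega=0$ and $C^2+\omega B=-\mathrm{id}$ to collapse the intermediate terms, one finds that the left-hand side of the second equation equals $-\mathcal{H}\nabla_X Z$; thus $\mathcal{H}\nabla_X Z=0$ is equivalent to the second equation. The delicate point is relating this to the second fundamental form: unlike the vertical-vertical case one cannot simply write $(\nabla F_*)(X,Z)=-F_*(\nabla_X Z)$, since $F_*Z\neq 0$ makes the definition carry the extra term $\nabla^F_X F_*Z$. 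I would reconcile this through symmetry, $(\nabla F_*)(X,Z)=(\nabla F_*)(Z,X)=-F_*(\mathcal{H}\nabla_Z X)$, combined with the tensorial nature of $(\nabla F_*)$: testing on a local horizontal frame of basic fields, along which $\nabla^F_X F_*Z=0$ and $\mathcal{H}[X,Z]=0$, gives $\mathcal{H}\nabla_Z X=\mathcal{H}\nabla_X Z$, identifying the mixed case with the second equation. Securing this identification of $\mathcal{H}\nabla_Z X$ with $\mathcal{H}\nabla_X Z$ is the main obstacle; once it is in place the remaining cases are routine and the theorem follows.
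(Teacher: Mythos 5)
Your three-case decomposition and your handling of the first two cases coincide exactly with the paper's proof: the horizontal--horizontal case is disposed of via Lemma \ref{lem: hori} together with $\bar{Q}(F_*(\nabla_{Z_1}Z_2))=0$, and the vertical--vertical case via the K\"ahler identity $\nabla_X Y=-J\nabla_X JY$, expansion through $\phi,\omega,B,C$ and the tensors $\widehat{\nabla},\mathcal{T}$, and injectivity of $F_*$ on $(\ker F_*)^{\perp}$. Where you diverge is the mixed case, and there your caution is more justified than you may realize: the paper's own proof simply writes $(\nabla F_*)(X,Z)=-F_*(\nabla_X Z)$, i.e.\ it silently discards exactly the term $\nabla^F_X F_*Z$ that you flag, with no appeal to symmetry or to basic fields. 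Your repair is the mathematically correct one: by symmetry $(\nabla F_*)(X,Z)=-F_*(\nabla_Z X)$, and on basic horizontal fields (which exist locally because $\rank F$ is locally constant, so $F$ factors locally as a Riemannian submersion onto its image followed by an isometric immersion) one has $\nabla^F_X F_*Z=0$ and $[X,Z]$ vertical, hence $\mathcal{H}\nabla_Z X=\mathcal{H}\nabla_X Z$ and $(\nabla F_*)(X,Z)=-F_*(\mathcal{H}\nabla_X Z)$, which your expansion converts into the second displayed equation.

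One point, however, must be made explicit, because it is where both your write-up and the paper blur the statement. The left-hand side of the second equation is not $C^{\infty}(M)$-linear in $Z$: replacing $Z$ by $fZ$ adds the term $X(f)(\omega BZ+C^2Z)=-X(f)Z$, by the identity $C^2+\omega B=-id$. Consequently ``the equation holds for all basic $Z$'' (which is what your argument, via tensoriality of $\nabla F_*$, actually characterizes) and ``the equation holds for all $Z\in\Gamma((\ker F_*)^{\perp})$'' (which is what the theorem literally asserts) are different conditions, and the literal version fails in the ``only if'' direction: the map $F:\mathbb{R}^2\to\mathbb{R}$, $F(x,y)=x$, is a totally geodesic semi-slant Riemannian map from a K\"ahler manifold, yet for $X=\partial/\partial y$ and $Z=f(y)\,\partial/\partial x$ the left-hand side equals $-f'(y)\,\partial/\partial x\neq 0$. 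So what your proposal proves is the corrected theorem, with the mixed condition quantified over basic horizontal fields; you should state that restriction explicitly rather than leaving it implicit in the phrase ``identifying the mixed case with the second equation,'' since without it the final equivalence you claim is not the one your argument establishes.
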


\begin{proof}
If $Z_1,Z_2\in \Gamma((\ker F_*)^{\perp})$, then by Lemma \ref {lem: hori}, we have
$$
(\nabla F_*)(Z_1,Z_2)=0 \quad \Leftrightarrow \quad \bar{Q}((\nabla F_*)(Z_1,Z_2)) = \bar{Q}(\nabla_{Z_1}^F F_* Z_2) = 0.
$$
Given $X,Y\in \Gamma(\ker F_*)$, we get
\begin{align*}
(\nabla F_*)(X,Y)&= -F_* (\nabla_X Y) = F_* (J\nabla_X (\phi Y+\omega Y))   \\
          &= F_* (\phi \widehat{\nabla}_X \phi Y+\omega\widehat{\nabla}_X \phi Y+B\mathcal{T}_X \phi Y
             +C\mathcal{T}_X \phi Y+\phi \mathcal{T}_X \omega Y+\omega \mathcal{T}_X \omega Y   \\
          & \ \ \  +B\mathcal{H}\nabla_X \omega Y+C\mathcal{H}\nabla_X \omega Y).
\end{align*}
Hence,
$$
(\nabla F_*)(X,Y) = 0 \Leftrightarrow \omega(\widehat{\nabla}_X \phi
Y+\mathcal{T}_X \omega Y)+C(\mathcal{T}_X \phi Y
    +\mathcal{H}\nabla_X \omega Y) = 0.
$$
If $X\in \Gamma(\ker F_*)$ and $Z\in \Gamma((\ker F_*)^{\perp})$, then since the tensor $\nabla F_*$ is symmetric,
we only need to consider the following:
\begin{align*}
(\nabla F_*)(X,Z)&= -F_* (\nabla_X Z) = F_* (J\nabla_X (BZ+CZ))   \\
          &= F_* (\phi \widehat{\nabla}_X BZ+\omega\widehat{\nabla}_X BZ+B\mathcal{T}_X BZ
             +C\mathcal{T}_X BZ+\phi \mathcal{T}_X CZ+\omega \mathcal{T}_X CZ   \\
          & \ \ \  +B\mathcal{H}\nabla_X CZ+C\mathcal{H}\nabla_X CZ).
\end{align*}
Thus,
$$
(\nabla F_*)(X,Z) = 0 \Leftrightarrow \omega(\widehat{\nabla}_X
BZ+\mathcal{T}_X CZ)+C(\mathcal{T}_X BZ
    +\mathcal{H}\nabla_X CZ) = 0.
$$
Therefore, we obtain the result.
\end{proof}

Let $F : (M,g_M)\mapsto (N,g_N)$ be a Riemannian map. The map
$F$ is called a Riemannian map {\em with totally umbilical
fibers} if
\begin{equation}\label{eq: umbil}
\mathcal{T}_X Y = g_M (X, Y)H \quad \text{for} \ X,Y\in \Gamma(\ker F_*),
\end{equation}
where $H$ is the mean curvature vector field of the fiber.

In the same way with the proof of Lemma 2.19 in \cite {PP}, we can show

\begin{lemma}
Let $F$ be a semi-slant Riemannian map with totally umbilical fibers
from a K\"{a}hler manifold $(M,g_M,J)$ to a Riemannian manifold
$(N,g_N)$. Then we have
$$
H\in \Gamma(\omega \mathcal{D}_2).
$$
\end{lemma}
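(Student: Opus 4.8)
The plan is to exploit the totally umbilical condition (\ref{eq: umbil}) together with the algebraic relations satisfied by the endomorphisms $\phi,\omega,B,C$, and in particular the slant identity (\ref{eq: angle3}), to show that the mean curvature vector $H$ has no component in $\mu$ and no component in $\omega\mathcal{D}_1=0$; since the horizontal space decomposes as $(\ker F_*)^{\perp}=\omega\mathcal{D}_2\oplus\mu$ by (\ref{eq: proj6}), this will force $H\in\Gamma(\omega\mathcal{D}_2)$. First I would note that $H$ is vertical-orthogonal, i.e. $H\in\Gamma((\ker F_*)^{\perp})$, so it suffices to kill its $\mu$-part. To isolate that part I would test $H$ against an arbitrary $V\in\Gamma(\mu)$ and against the $JV$-direction, using that $\mu$ is $J$-invariant.

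The key computational step uses the first identity of part (1) of the unnumbered Lemma (the one listing $\widehat\nabla_X\phi Y+\mathcal{T}_X\omega Y=\phi\widehat\nabla_X Y+B\mathcal{T}_X Y$, etc.) specialized via the umbilical hypothesis. Substituting $\mathcal{T}_X Y=g_M(X,Y)H$ and $\mathcal{T}_X\omega Y=g_M(X,\omega Y)H$ (here $\omega Y$ is horizontal, so one should instead feed the second identity of part (1) and the $\mathcal{T}$-definitions carefully), I would take $X=Y\in\Gamma(\mathcal{D}_1)$ with $|X|=1$. Because $\phi\mathcal{D}_1=\mathcal{D}_1$ and $\omega\mathcal{D}_1=0$, the umbilical relation gives $\mathcal{T}_X\phi X=g_M(X,\phi X)H=0$ since $\phi X\perp X$ on $\mathcal{D}_1$, and $\mathcal{T}_X X=H$. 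Plugging into the appropriate identity and projecting onto $\mu$ yields, after using $\phi^2=-id$ on $\mathcal{D}_1$, an equation of the form $C\,H$-component $=0$ forcing $g_M(CH,V)=0$ for all $V\in\Gamma(\mu)$; combined with $g_M(H,V)=0$ coming from testing the $\mathcal{D}_2$-version against $\mu$, this eliminates the $\mu$-component.

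The step I expect to be the main obstacle is correctly bookkeeping which of the four identities in the Lemma to invoke and ensuring the projections land in the right summand: one must separate the $\Gamma(\ker F_*)$-valued outputs ($\widehat\nabla$, $\phi$, $B$) from the $\Gamma((\ker F_*)^{\perp})$-valued outputs ($\mathcal{T}$ into horizontal, $\mathcal{H}\nabla$, $\omega$, $C$), and then within the horizontal part further split along (\ref{eq: proj6}). In particular, showing the $\mu$-component of $H$ vanishes requires pairing with both $V$ and $JV=CV$ (since $JV\in\mu$ when $V\in\mu$) and using $g_M(\omega X,V)=0$ for $X\in\Gamma(\mathcal{D}_2)$, $V\in\Gamma(\mu)$. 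Once $g_M(H,V)=0$ for every $V\in\Gamma(\mu)$ is established, the decomposition (\ref{eq: proj6}) immediately gives $H\in\Gamma(\omega\mathcal{D}_2)$, and since the whole argument parallels the proof of Lemma 2.19 in \cite{PP}, I would conclude by citing that parallel rather than reproducing every contraction.
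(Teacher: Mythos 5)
Your proposal is correct and takes essentially the same route as the paper, which simply appeals to the proof of Lemma 2.19 in \cite{PP}: take a unit $X\in\Gamma(\mathcal{D}_1)$, use the horizontal part of the K\"{a}hler identity (part (1) of the Lemma) together with umbilicity ($\mathcal{T}_X\phi X=g_M(X,\phi X)H=0$, $\mathcal{T}_X X=H$, $\omega X=0$) to get $CH=-\omega\widehat{\nabla}_X X\in\Gamma(\omega\mathcal{D}_2)$, and then pair with $V$ and $JV$ for $V\in\Gamma(\mu)$ to kill the $\mu$-component of $H$ via (\ref{eq: proj6}). Your one slip---substituting the umbilical relation into $\mathcal{T}_X\omega Y$, which is illegitimate since $\omega Y$ is horizontal---is harmless because $\omega Y=0$ for $Y\in\Gamma(\mathcal{D}_1)$, and you flagged and corrected it yourself.
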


\section{Decomposition theorems}\label{decom}

Given a Riemannian manifold $(M,g_M)$, we consider a distribution $\mathcal{D}$ on $M$. We call the distribution $\mathcal{D}$
{\em autoparallel} (or a {\em totally geodesic foliation}) if $\nabla_X Y\in \Gamma(\mathcal{D})$ for $X,Y\in \Gamma(\mathcal{D})$.
If $\mathcal{D}$ is autoparallel, then it is obviously integrable and its leaves are totally geodesic in $M$. The distribution
$\mathcal{D}$ is said to be {\em parallel} if $\nabla_Z Y\in \Gamma(\mathcal{D})$ for $Y\in \Gamma(\mathcal{D})$ and $Z\in \Gamma(TM)$.
If $\mathcal{D}$ is parallel, then we easily obtain that its orthogonal complementary distribution $\mathcal{D}^{\perp}$ is also parallel.
In this situation, $M$ is locally a Riemannian product manifold of the leaves of $\mathcal{D}$ and $\mathcal{D}^{\perp}$.
It is also easy to show that if the distributions $\mathcal{D}$ and $\mathcal{D}^{\perp}$ are simultaneously autoparallel, then
they are also parallel.

\begin{theorem}
Let $F$ be a semi-slant Riemannian map from a K\"{a}hler manifold $(M,g_M,J)$ to a Riemannian manifold $(N,g_N)$.
Then $(M,g_M,J)$ is locally a Riemannian product manifold of the leaves of $\ker F_*$ and $(\ker F_*)^{\perp}$
if and only if
$$
\omega (\widehat{\nabla}_X \phi Y+\mathcal{T}_X \omega
Y)+C(\mathcal{T}_X \phi Y+\mathcal{H}\nabla_X \omega Y) = 0 \quad
\text{for} \ X,Y\in \Gamma(\ker F_*)
$$
and
$$
\phi(\mathcal{V}{\nabla}_Z BW+\mathcal{A}_Z CW)+B(\mathcal{A}_Z
BW+\mathcal{H}\nabla_Z CW) = 0 \quad \text{for} \ Z,W\in
\Gamma((\ker F_*)^{\perp}).
$$
\end{theorem}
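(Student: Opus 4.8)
The plan is to reduce the claim to the standard fact, recorded at the start of this section, that $(M,g_M)$ is locally a Riemannian product of the leaves of $\ker F_*$ and $(\ker F_*)^{\perp}$ exactly when these two mutually orthogonal distributions are simultaneously autoparallel (equivalently, both parallel). Hence it suffices to prove that the first displayed identity is equivalent to $\ker F_*$ being a totally geodesic foliation and that the second is equivalent to $(\ker F_*)^{\perp}$ being one; the local product structure is then immediate.

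First I would rephrase autoparallelism through the tensors $\mathcal{T}$ and $\mathcal{A}$. For $X,Y\in\Gamma(\ker F_*)$ the definition (\ref{eq: oten2}) collapses to $\mathcal{T}_X Y=\mathcal{H}\nabla_X Y$, so $\nabla_X Y\in\Gamma(\ker F_*)$ for all such $X,Y$ is equivalent to $\mathcal{T}_X Y=0$. Dually, for $Z,W\in\Gamma((\ker F_*)^{\perp})$ the definition (\ref{eq: oten1}) collapses to $\mathcal{A}_Z W=\mathcal{V}\nabla_Z W$, so $(\ker F_*)^{\perp}$ is autoparallel if and only if $\mathcal{A}_Z W=0$.

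The heart of the argument is then a two-fold use of $\nabla J=0$. For $X,Y\in\Gamma(\ker F_*)$, I would write $J\nabla_X Y=\nabla_X JY=\nabla_X\phi Y+\nabla_X\omega Y$ and split by (\ref{eq: proj2})--(\ref{eq: proj3}): since $\phi Y$ is vertical and $\omega Y$ horizontal, the vertical part of $J\nabla_X Y$ is $\widehat{\nabla}_X\phi Y+\mathcal{T}_X\omega Y$ and its horizontal part is $\mathcal{T}_X\phi Y+\mathcal{H}\nabla_X\omega Y$. Applying $J$ once more and using $J^2=-\mathrm{id}$, the horizontal component of $-\nabla_X Y=J(J\nabla_X Y)$ is precisely $\omega(\widehat{\nabla}_X\phi Y+\mathcal{T}_X\omega Y)+C(\mathcal{T}_X\phi Y+\mathcal{H}\nabla_X\omega Y)$, while on the other hand the horizontal component of $-\nabla_X Y$ equals $-\mathcal{T}_X Y$. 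Thus the first displayed expression equals $-\mathcal{T}_X Y$, and its vanishing is exactly $\mathcal{T}_X Y=0$. Running the mirror-image computation for $Z,W\in\Gamma((\ker F_*)^{\perp})$, namely $J\nabla_Z W=\nabla_Z BW+\nabla_Z CW$ and extracting the vertical component of $-\nabla_Z W=J(J\nabla_Z W)$, shows that the second displayed expression equals $-\mathcal{A}_Z W$, whose vanishing is $\mathcal{A}_Z W=0$. Combining the two equivalences with the opening remark of the section finishes both directions at once.

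I expect no genuine obstacle here; the computation is a clean double application of the Kähler identity together with the splittings (\ref{eq: proj2})--(\ref{eq: proj3}), and one may double-check the collapse using the algebraic relations $\phi^2+B\omega=-\mathrm{id}$, $C^2+\omega B=-\mathrm{id}$, $\omega\phi+C\omega=0$, $BC+\phi B=0$. The only point demanding care is the bookkeeping of vertical versus horizontal parts: one must feed each of $\mathcal{T}$ and $\mathcal{A}$ arguments of the correct type and track which summand of $J$ (namely $\phi,\omega$ on vertical and $B,C$ on horizontal vectors) contributes to which component, so that the two bracketed expressions collapse exactly to $-\mathcal{T}_X Y$ and $-\mathcal{A}_Z W$.
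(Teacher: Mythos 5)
Your proposal is correct and follows essentially the same route as the paper: a double application of $\nabla J=0$ together with the splittings (\ref{eq: proj2})--(\ref{eq: proj3}) to show each displayed condition is equivalent to autoparallelism of the corresponding distribution, then the standard fact that two complementary autoparallel orthogonal distributions yield a local Riemannian product. The only cosmetic difference is that you name the horizontal (resp.\ vertical) part of $-\nabla_X Y$ (resp.\ $-\nabla_Z W$) as $-\mathcal{T}_X Y$ (resp.\ $-\mathcal{A}_Z W$), whereas the paper reads off the components directly; the computations are identical.
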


\begin{proof}
For $X,Y\in \Gamma(\ker F_*)$,
\begin{align*}
\nabla_X Y&= -J\nabla_X JY= -J(\widehat{\nabla}_X \phi Y+\mathcal{T}_X \phi Y+\mathcal{T}_X \omega Y
             +\mathcal{H}\nabla_X \omega Y)   \\
          &= -(\phi \widehat{\nabla}_X \phi Y+\omega\widehat{\nabla}_X \phi Y+B\mathcal{T}_X \phi Y
             +C\mathcal{T}_X \phi Y+\phi \mathcal{T}_X \omega Y+\omega \mathcal{T}_X \omega Y    \\
          & \ \ \  +B\mathcal{H}\nabla_X \omega Y+C\mathcal{H}\nabla_X \omega
             Y).
\end{align*}
Thus,
$$
\nabla_X Y\in \Gamma(\ker F_*) \Leftrightarrow
\omega(\widehat{\nabla}_X \phi Y+\mathcal{T}_X \omega
Y)+C(\mathcal{T}_X \phi Y+\mathcal{H}\nabla_X \omega Y) =0.
$$
Given $Z,W\in \Gamma((\ker F_*)^{\perp})$, we have
\begin{align*}
\nabla_Z W & = -J\nabla_Z JW = -J(\mathcal{V}\nabla_Z BW+\mathcal{A}_Z
BW+\mathcal{A}_Z CW+\mathcal{H}\nabla_Z CW)   \\
           & = -(\phi \mathcal{V}\nabla_Z BW+\omega\mathcal{V}\nabla_Z BW+B\mathcal{A}_Z BW
             +C\mathcal{A}_Z BW+\phi \mathcal{A}_Z CW   \\
           &  \ \ \ +\omega \mathcal{A}_Z CW  +B\mathcal{H}\nabla_Z CW+C\mathcal{H}\nabla_Z CW).
\end{align*}
Hence,
$$
\nabla_Z W\in \Gamma((\ker F_*)^{\perp}) \Leftrightarrow
\phi(\mathcal{V}{\nabla}_Z BW+\mathcal{A}_Z CW)+B(\mathcal{A}_Z
BW+\mathcal{H}\nabla_Z CW) = 0.
$$
Therefore, the result follows.
\end{proof}

\begin{theorem}
Let $F$ be a semi-slant Riemannian map from a K\"{a}hler manifold $(M,g_M,J)$ to a Riemannian manifold $(N,g_N)$.
Then the fibers of $F$ are locally Riemannian product manifolds of the leaves of $\mathcal{D}_1$ and $\mathcal{D}_2$
if and only if
$$
Q(\phi\widehat{\nabla}_U \phi V+B\mathcal{T}_U \phi V) = 0 \
\text{and} \ \omega\widehat{\nabla}_U \phi V+C\mathcal{T}_U \phi V =
0 \quad \text{for} \ U,V\in \Gamma(\mathcal{D}_1)
$$
and
\begin{align*}
  &P(\phi(\widehat{\nabla}_X \phi Y+\mathcal{T}_X \omega Y)+B(\mathcal{T}_X \phi Y
    +\mathcal{H}\nabla_X \omega Y)) = 0     \\
  &\omega(\widehat{\nabla}_X \phi Y+\mathcal{T}_X \omega Y)+C(\mathcal{T}_X \phi Y
    +\mathcal{H}\nabla_X \omega Y) = 0
\end{align*}
for $X,Y\in \Gamma(\mathcal{D}_2)$.
\end{theorem}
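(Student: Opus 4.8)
The plan is to mirror the preceding decomposition theorem, reducing the product statement to the autoparallelism of the two distributions and then unwinding everything through the K\"{a}hler identity $\nabla J = 0$. First I would record the geometric meaning of the conclusion: within each fiber the tangent space is $\ker F_* = \mathcal{D}_1 \oplus \mathcal{D}_2$ with the two summands orthogonal, so the fibers are locally Riemannian products of the leaves of $\mathcal{D}_1$ and $\mathcal{D}_2$ precisely when both distributions are autoparallel. Since an ambient covariant derivative landing in $\ker F_*$ coincides with its vertical part, I would phrase this as $\nabla_U V \in \Gamma(\mathcal{D}_1)$ for $U,V \in \Gamma(\mathcal{D}_1)$ and $\nabla_X Y \in \Gamma(\mathcal{D}_2)$ for $X,Y \in \Gamma(\mathcal{D}_2)$; the simultaneous autoparallelism of complementary distributions then yields parallelism and hence the local product, exactly as in the opening remarks of Section~\ref{decom}.

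For the first pair of conditions I would take $U,V \in \Gamma(\mathcal{D}_1)$. Because $\mathcal{D}_1$ is invariant we have $\omega V = 0$ and $JV = \phi V$, so using $\nabla J = 0$ I write $\nabla_U V = -J\nabla_U JV = -J\nabla_U \phi V$. Splitting $\nabla_U \phi V = \widehat{\nabla}_U \phi V + \mathcal{T}_U \phi V$ into its vertical and horizontal parts and applying $J = \phi + \omega$ on $\ker F_*$ and $J = B + C$ on $(\ker F_*)^{\perp}$ gives
\begin{equation*}
\nabla_U V = -(\phi\widehat{\nabla}_U \phi V + \omega\widehat{\nabla}_U \phi V + B\mathcal{T}_U \phi V + C\mathcal{T}_U \phi V).
\end{equation*}
Now I would invoke the structure relations $\phi\mathcal{D}_1 = \mathcal{D}_1$, $\phi\mathcal{D}_2 \subset \mathcal{D}_2$ and $B((\ker F_*)^{\perp}) = \mathcal{D}_2$: the terms $\omega\widehat{\nabla}_U \phi V$ and $C\mathcal{T}_U \phi V$ are horizontal, while $B\mathcal{T}_U \phi V$ and the $\mathcal{D}_2$-part of $\phi\widehat{\nabla}_U \phi V$ lie in $\mathcal{D}_2$. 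Requiring $\nabla_U V \in \Gamma(\mathcal{D}_1)$ is therefore equivalent to the vanishing of its $\mathcal{D}_2$-component and of its horizontal component, which read exactly $Q(\phi\widehat{\nabla}_U \phi V + B\mathcal{T}_U \phi V) = 0$ and $\omega\widehat{\nabla}_U \phi V + C\mathcal{T}_U \phi V = 0$.

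The second pair is obtained in the same spirit with $X,Y \in \Gamma(\mathcal{D}_2)$, the only new feature being that $\omega Y \neq 0$, so I must carry both $\phi Y$ and $\omega Y$. Writing $\nabla_X Y = -J\nabla_X(\phi Y + \omega Y)$, decomposing $\nabla_X \phi Y = \widehat{\nabla}_X \phi Y + \mathcal{T}_X \phi Y$ and $\nabla_X \omega Y = \mathcal{T}_X \omega Y + \mathcal{H}\nabla_X \omega Y$, and again applying $J = \phi + \omega$ and $J = B + C$ to the vertical and horizontal pieces, I would collect
\begin{equation*}
\nabla_X Y = -\bigl(\phi(\widehat{\nabla}_X \phi Y + \mathcal{T}_X \omega Y) + \omega(\widehat{\nabla}_X \phi Y + \mathcal{T}_X \omega Y) + B(\mathcal{T}_X \phi Y + \mathcal{H}\nabla_X \omega Y) + C(\mathcal{T}_X \phi Y + \mathcal{H}\nabla_X \omega Y)\bigr).
\end{equation*}
Then $\nabla_X Y \in \Gamma(\mathcal{D}_2)$ is equivalent to the vanishing of the $\mathcal{D}_1$-part of the vertical component and of the horizontal component, which are precisely $P(\phi(\widehat{\nabla}_X \phi Y + \mathcal{T}_X \omega Y) + B(\mathcal{T}_X \phi Y + \mathcal{H}\nabla_X \omega Y)) = 0$ and $\omega(\widehat{\nabla}_X \phi Y + \mathcal{T}_X \omega Y) + C(\mathcal{T}_X \phi Y + \mathcal{H}\nabla_X \omega Y) = 0$.

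I expect the main obstacle to be bookkeeping rather than ideas: one must track each of the eight summands through the operators $\phi, \omega, B, C$ and assign it correctly to $\mathcal{D}_1$, $\mathcal{D}_2$, or $(\ker F_*)^{\perp}$, leaning on $\omega\mathcal{D}_1 = 0$, $\phi\mathcal{D}_2 \subset \mathcal{D}_2$ and $B((\ker F_*)^{\perp}) = \mathcal{D}_2$ to see that no spurious cross-terms survive. The one conceptual point needing care is the first step, namely justifying that the local product structure of the fibers is faithfully captured by $\nabla_U V \in \Gamma(\mathcal{D}_1)$ and $\nabla_X Y \in \Gamma(\mathcal{D}_2)$ (equivalently, that the leaves are totally geodesic in $M$ and the two distributions parallel along the fibers), so that the four displayed equations are genuinely equivalent to the asserted product statement.
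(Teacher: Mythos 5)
Your proposal is correct and follows essentially the same route as the paper's own proof: the identity $\nabla_U V = -J\nabla_U JV$ from $\nabla J = 0$, the splitting of $\nabla_U \phi V$ and $\nabla_X(\phi Y + \omega Y)$ into vertical and horizontal parts via $\widehat{\nabla}$, $\mathcal{T}$, $\mathcal{H}\nabla$, the application of $\phi+\omega$ and $B+C$, and the reduction of the product statement to autoparallelism of $\mathcal{D}_1$ and $\mathcal{D}_2$ via the remarks opening Section~\ref{decom}. Your explicit discussion of why the product structure is captured by $\nabla_U V \in \Gamma(\mathcal{D}_1)$ and $\nabla_X Y \in \Gamma(\mathcal{D}_2)$ is exposition the paper leaves implicit, but the argument is the same.
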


\begin{proof}
Given $U,V\in \Gamma(\mathcal{D}_1)$, we get
\begin{align*}
\nabla_U V&= -J\nabla_U JV= -J(\widehat{\nabla}_U \phi V+\mathcal{T}_U \phi V)   \\
          &= -(\phi \widehat{\nabla}_U \phi V+\omega\widehat{\nabla}_U \phi V+B\mathcal{T}_U \phi V
             +C\mathcal{T}_U \phi V).
\end{align*}
Hence,
$$
\nabla_U V\in \Gamma(\mathcal{D}_1) \Leftrightarrow Q(\phi
\widehat{\nabla}_U \phi V+B\mathcal{T}_U \phi V) = 0 \ \text{and} \
\omega\widehat{\nabla}_U \phi V+C\mathcal{T}_U \phi V = 0.
$$
For $X,Y\in \Gamma(\mathcal{D}_2)$, we obtain
\begin{align*}
\nabla_X Y&= -J\nabla_X JY= -J(\widehat{\nabla}_X
\phi Y+\mathcal{T}_X \phi Y+\mathcal{T}_X \omega Y+\mathcal{H}\nabla_X \omega Y)   \\
          &= -(\phi \widehat{\nabla}_X \phi Y+\omega\widehat{\nabla}_X \phi Y+B\mathcal{T}_X \phi Y
             +C\mathcal{T}_X \phi Y+\phi \mathcal{T}_X \omega Y   \\
          & \ \ \ +\omega \mathcal{T}_X \omega Y  +B\mathcal{H}\nabla_X \omega Y+
          C\mathcal{H}\nabla_X \omega Y).
\end{align*}
Thus,

$\nabla_X Y\in \Gamma(\mathcal{D}_2) \Leftrightarrow$
$$
  \begin{cases}
     P(\phi(\widehat{\nabla}_X \phi Y+\mathcal{T}_X \omega Y)+B(\mathcal{T}_X \phi Y
    +\mathcal{H}\nabla_X \omega Y)) = 0,& \\
     \omega(\widehat{\nabla}_X \phi Y+\mathcal{T}_X \omega Y)+C(\mathcal{T}_X \phi Y
    +\mathcal{H}\nabla_X \omega Y) = 0.&
  \end{cases}
$$
Therefore, we have the result.
\end{proof}

\section{Examples}\label{exam}

Note that given an Euclidean space $\mathbb{R}^{2n}$ with coordinates $(x_1,x_2,\cdots,x_{2n})$, we can canonically choose an
almost complex structure $J$ on $\mathbb{R}^{2n}$ as follows:
\begin{align*}
&J(a_1\frac{\partial}{\partial x_1}+a_2\frac{\partial}{\partial x_2}+\cdots+a_{2n-1}\frac{\partial}{\partial x_{2n-1}}+a_{2n}\frac{\partial}{\partial x_{2n}}) \\
&= -a_2\frac{\partial}{\partial x_1}+a_1\frac{\partial}{\partial x_2}+\cdots-a_{2n}\frac{\partial}{\partial x_{2n-1}}+a_{2n-1}\frac{\partial}{\partial x_{2n}},
\end{align*}
where $a_1, \cdots, a_{2n}\in \mathbb{R}$.
Throughout this section, we will use this notation.

\begin{example}
Let $F$ be an almost Hermitian submersion from an almost Hermitian manifold
$(M,g_M,J_M)$ onto an almost Hermitian manifold $(N,g_N,J_N)$ \cite{W}. Then the
map $F$ is a semi-slant Riemannian map with $\mathcal{D}_1 = \ker F_*$.
\end{example}

\begin{example}
Let $F$ be a slant submersion from an almost Hermitian manifold
$(M,g_M,J)$ onto a Riemannian manifold $(N,g_N)$ with the slant angle $\theta$ \cite{S}. Then the
map $F$ is a semi-slant Riemannian map such that $\mathcal{D}_2 = \ker F_*$ and the semi-slant angle $\theta$.
\end{example}

\begin{example}
Let $F$ be an anti-invariant submersion from an almost Hermitian manifold
$(M,g_M,J)$ onto a Riemannian manifold $(N,g_N)$ \cite{S3}. Then the
map $F$ is a semi-slant Riemannian map such that $\mathcal{D}_2 = \ker F_*$ and the semi-slant angle $\theta = \frac{\pi}{2}$.
\end{example}

\begin{example}
Let $F$ be a semi-invariant submersion from an almost Hermitian
manifold $(M,g_M,J)$ onto a Riemannian manifold $(N,g_N)$ \cite{S2}.
Then the map $F$ is a semi-slant Riemannian map with the semi-slant
angle $\theta=\frac{\pi}{2}$.
\end{example}

\begin{example}
Let $F$ be a semi-slant submersion from an almost Hermitian
manifold $(M,g_M,J)$ onto a Riemannian manifold $(N,g_N)$ with the semi-slant
angle $\theta$ \cite{PP}.
Then the map $F$ is a semi-slant Riemannian map with the semi-slant
angle $\theta$.
\end{example}

\begin{example}
Let $(M,g_M,J)$ be a $2m$-dimensional almost Hermitian manifold and $(N,g_N)$ a $(2m-1)$-dimensional Riemannian manifold.
Let $F$ be a Riemannian map from an almost Hermitian
manifold $(M,g_M,J)$ to a Riemannian manifold $(N,g_N)$ with $\rank F = 2m-1$.
Then the map $F$ is a semi-slant Riemannian map such that
$\mathcal{D}_2 = \ker F_*$ and the semi-slant
angle $\theta=\frac{\pi}{2}$.
\end{example}

\begin{example}
Define a map $F : \mathbb{R}^8 \mapsto \mathbb{R}^5$ by
$$
F(x_1,x_2,\cdots, x_8) = (x_2,x_1,\frac{x_5\cos \alpha+x_6 \sin \alpha+x_4}{\sqrt{2}},0,x_5\sin \alpha-x_6\cos \alpha)
$$
with $\alpha\in (0, \frac{\pi}{2})$.
Then the map $F$ is a semi-slant Riemannian map such that
$$
\mathcal{D}_1 = <\frac{\partial}{\partial x_7},
\frac{\partial}{\partial x_8}> \ \text{and} \ \mathcal{D}_2 =
<\frac{\partial}{\partial x_3}, \cos \alpha \frac{\partial}{\partial
x_5}+\sin \alpha \frac{\partial}{\partial x_6}-\frac{\partial}{\partial x_4}>
$$
with the semi-slant angle $\theta=\frac{\pi}{4}$.
\end{example}

\begin{example}
Define a map $F : \mathbb{R}^6 \mapsto \mathbb{R}^3$ by
$$
F(x_1,x_2,\cdots, x_6) = (x_1 \cos \alpha-x_3 \sin \alpha,c,x_4),
$$
where $\alpha\in (0,\frac{\pi}{2})$ and $c\in\mathbb{R}$. Then the map $F$ is a
semi-slant Riemannian map such that
$$
\mathcal{D}_1 = <\frac{\partial}{\partial x_5},
\frac{\partial}{\partial x_6}> \ \text{and} \ \mathcal{D}_2 =
<\frac{\partial}{\partial x_2}, \sin \alpha \frac{\partial}{\partial
x_1}+\cos \alpha \frac{\partial}{\partial x_3}>
$$
with the semi-slant angle $\theta=\alpha$.
\end{example}

\begin{example}
Define a map $F : \mathbb{R}^{10} \mapsto \mathbb{R}^7$ by
$$
F(x_1,x_2,\cdots, x_{10}) =
(x_4,0,x_3,\frac{x_5-x_6}{\sqrt{2}},0,\frac{x_7+x_9}{\sqrt{2}},\frac{x_8+x_{10}}{\sqrt{2}}).
$$
Then the map $F$ is a semi-slant Riemannian map such that
$$
\mathcal{D}_1 = <\frac{\partial}{\partial x_1},
\frac{\partial}{\partial x_2},-\frac{\partial}{\partial
x_7}+\frac{\partial}{\partial x_9},-\frac{\partial}{\partial
x_8}+\frac{\partial}{\partial x_{10}}> \ \text{and} \ \mathcal{D}_2
= <\frac{\partial}{\partial x_5}+\frac{\partial}{\partial x_6}>
$$
with the semi-slant angle $\theta=\frac{\pi}{2}$.
\end{example}

\begin{example}
Define a map $F : \mathbb{R}^{10} \mapsto \mathbb{R}^5$ by
$$
F(x_1,x_2,\cdots, x_{10}) =
(\frac{x_3+x_5}{\sqrt{2}},2012,x_6,\frac{x_7+x_9}{\sqrt{2}},x_8).
$$
Then the map $F$ is a semi-slant Riemannian map such that
$$
\mathcal{D}_1 = <\frac{\partial}{\partial x_1},
\frac{\partial}{\partial x_2}> \ \text{and} \ \mathcal{D}_2 =
<\frac{\partial}{\partial x_3}-\frac{\partial}{\partial x_5},
\frac{\partial}{\partial x_7}-\frac{\partial}{\partial
x_9},\frac{\partial}{\partial x_4},\frac{\partial}{\partial x_{10}}>
$$
with the semi-slant angle $\theta=\frac{\pi}{4}$.
\end{example}

\begin{example}
Define a map $F : \mathbb{R}^8 \mapsto \mathbb{R}^5$ by
$$
F(x_1,x_2,\cdots, x_8) = (x_8,x_7,\gamma,x_3 \cos \alpha-x_5 \sin
\alpha,x_4\sin \beta-x_6\cos \beta),
$$
where $\alpha$, $\beta$, $\gamma$ are constant. Then the map $F$ is a
semi-slant Riemannian map such that
$$
\mathcal{D}_1 = <\frac{\partial}{\partial
x_1},\frac{\partial}{\partial x_2}> \ \text{and} \ \mathcal{D}_2 =
<\sin \alpha\frac{\partial}{\partial x_3}+\cos
\alpha\frac{\partial}{\partial x_5}, \cos
\beta\frac{\partial}{\partial x_4}+\sin
\beta\frac{\partial}{\partial x_6}>
$$
with the semi-slant angle $\theta$ with $\cos\theta=|\sin
(\alpha+\beta)|$.
\end{example}

\begin{example}
Let $\widetilde{F}$ be a slant Riemannian map from an almost Hermitian manifold
$(M_1,g_{M_1},J_1)$ to a Riemannian manifold $(N,g_N)$ with the
slant angle $\theta$ \cite{S4} and $(M_2,g_{M_2},J_2)$ an almost Hermitian
manifold. Let $(M,g,J)$ be the warped product of
$(M_1,g_{M_1},J_1)$ and $(M_2,g_{M_2},J_2)$ by a positive function
$f$ on $M_1$ \cite {FIP}, where $J=J_1\times J_2$. Define a map $F :
 (M,g,J)\mapsto (N,g_N)$ by
$$
F(x,y) = \widetilde{F}(x) \quad \text{for} \ x\in M_1 \ \text{and} \ y\in M_2.
$$
Then the map $F$ is a semi-slant Riemannian map such that
$\mathcal{D}_1 = TM_2$ and $\mathcal{D}_2 = \ker \widetilde{F}_*$ with the semi-slant angle $\theta$.
\end{example}



\end{document}